\newtheorem{theorem}{Theorem}[section]
\newtheorem*{theorem*}{Theorem}
\newtheorem{lemma}{Lemma}[section]
\newtheorem{proposition}{Proposition}[section]
\newtheorem{conjecture}[theorem]{Conjecture}
\theoremstyle{definition}
\newtheorem{definition}[theorem]{Definition}
\newtheorem{example}[theorem]{Example}
\newtheorem{remark}[theorem]{Remark}
\newcommand{\R}{\mathbb{R}}
\def \b {\beta}
\def\Ric{\text{Ric}}
\def\a{\alpha}
\def\l{\lambda}
\def\R{\mathbb{R}}
\def\vp{\varphi}
\def\id{\operatorname{id}}
\def\Ric{\operatorname{Ric}}
\def\tr{\operatorname{tr}}
\numberwithin{equation}{section}
\newcommand*\owedge{\mathpalette\@owedge\relax}
\newcommand*\@owedge[1]{%
  \mathbin{%
    \ooalign{%
      $#1\m@th\bigcirc$\cr
      \hidewidth$#1\m@th\wedge$\hidewidth\cr
    }%
  }%
}
\DeclareMathOperator{\spann}{span}
\begin{document}

\title[The curvature operator of the second kind]{Manifolds with nonnegative curvature operator of the second kind}

\author{Xiaolong Li}\thanks{The author's research is partially supported by Simons Collaboration Grant \#962228 and a start-up grant at Wichita State University}
\address{Department of Mathematics, Statistics and Physics, Wichita State University, Wichita, KS, 67260}
\email{xiaolong.li@wichita.edu}

\subjclass[2020]{53C20, 53C21, 53C24}

\keywords{Curvature operator of the second kind, Nishikawa's conjecture, differentiable sphere theorem, rigidity theorem}

\begin{abstract}
We investigate the curvature operator of the second kind on Riemannian manifolds and prove several classification results. The first one asserts that a closed Riemannian manifold with three-positive curvature operator of the second kind is diffeomorphic to a spherical space form, improving a recent result of Cao-Gursky-Tran assuming two-positivity. 
The second one states that a closed Riemannian manifold with three-nonnegative curvature operator of the second kind is either diffeomorphic to a spherical space form, or flat, or isometric to a quotient of a compact irreducible symmetric space. This settles the nonnegativity part of Nishikawa's conjecture under a weaker assumption.
\end{abstract}

\maketitle

\section{Introduction}

Riemannian manifolds with positive curvature operator have been of great interest in geometry and topology for a long time, and a number of remarkable results have been obtained by various authors. For instance, Meyer \cite{Meyer71} proved, using the Bochner technique, that a closed Riemannian manifold with positive curvature operator must be a real homology sphere. 
A well-known theorem of Tachibana \cite{Tachibana74} states that an Einstein manifold with positive curvature operator must have constant positive sectional curvature. 
Indeed, these two results are both consequences of the following celebrated differentiable sphere theorem. 
\begin{theorem}\label{thm Bohm-Wilking}
A closed Riemannian manifold with two-positive curvature operator is diffeomorphic to a spherical space form.
\end{theorem}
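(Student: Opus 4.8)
The plan is to run the Ricci flow $\partial_t g(t) = -2\Ric_{g(t)}$ with $g(0)=g_0$ and show that the normalized flow converges smoothly to a metric of constant positive sectional curvature; since a closed manifold of constant positive curvature has universal cover $\Sph^n$ and hence is a spherical space form, this yields the conclusion. As $M$ is closed, short-time existence is standard, and by Uhlenbeck's trick the curvature operator $R=R_{g(t)}$, transported into a fixed bundle modeled on the space of algebraic curvature operators (symmetric endomorphisms of $\Lambda^2\R^n$ obeying the first Bianchi identity), satisfies
\[
\frac{\partial}{\partial t} R = \Delta R + 2\bigl(R^2 + R^{\#}\bigr),
\]
where $R^2+R^{\#}$ is Hamilton's $\Oo(n)$-equivariant quadratic term. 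By Hamilton's maximum principle for systems, any closed convex $\Oo(n)$-invariant subset $\mathcal{C}$ of this bundle that is preserved by the ODE $\frac{d}{dt}R = R^2+R^{\#}$ and contains $R_{g_0}$ pointwise is preserved along the flow. Thus the problem reduces entirely to the study of this vector-valued ODE and the construction of suitable invariant cones.

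The first step is to record that the cone $\mathcal{C}_2$ of $2$-nonnegative curvature operators is ODE-invariant. This is the usual first-variation argument: writing $\lambda_1(R)+\lambda_2(R) = \min\{\langle R\omega,\omega\rangle+\langle R\eta,\eta\rangle\}$ over orthonormal pairs $\omega,\eta\in\Lambda^2\R^n$, one checks that at a boundary operator with $\lambda_1+\lambda_2=0$ the quantity $\langle (R^2+R^{\#})\omega,\omega\rangle+\langle (R^2+R^{\#})\eta,\eta\rangle$ is nonnegative on the relevant supporting subspace, so $\frac{d}{dt}(\lambda_1+\lambda_2)\ge 0$ along the ODE. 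Because $M$ is closed and $g_0$ has \emph{strictly} $2$-positive curvature operator, compactness gives a uniform lower bound that places $R_{g_0}$ in the interior of $\mathcal{C}_2$, in fact inside a smaller, uniformly pinched sub-cone; this is what lets the pinching argument get off the ground.

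The heart of the matter — and the step I expect to be the main obstacle — is the construction, following Böhm--Wilking, of a \emph{continuous pinching family} of ODE-invariant cones interpolating between a sub-cone of $\mathcal{C}_2$ and the ray $\R_{\ge 0}\,\id$ of constant-curvature operators. One introduces the linear maps $\ell_{a,b}$ on algebraic curvature operators acting on the irreducible decomposition $R = R_I \oplus R_0 \oplus W$ (scalar $\oplus$ traceless-Ricci $\oplus$ Weyl) by the identity on $W$ together with prescribed rescalings of $R_I$ and $R_0$; concretely $\ell_{a,b}(R) = R + b\,\mathring{\Ric}(R)\owedge\id + c(a,b,n)\,\scal(R)\,\id$ with $c$ chosen so traces match. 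The crucial algebraic identity is a transformation law of the form $(\ell_{a,b}R)^2 + (\ell_{a,b}R)^{\#} = \ell_{a,b}\bigl(R^2+R^{\#}\bigr) + E_{a,b}(R)$ with explicitly computable error $E_{a,b}$; one must show that for suitable paths of parameters $E_{a,b}$ points \emph{inward}, so that $\ell_{a,b}(\mathcal{C})$ is invariant under a modified ODE whenever $\mathcal{C}$ is, and that composing such maps produces cones that shrink toward the isotropic direction while remaining closed, convex, $\Oo(n)$-invariant, and ODE-invariant all the way down to $\R_{\ge 0}\,\id$. Controlling the sign of the correction terms and the geometry of the transformed cones is the genuine technical core of the proof.

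Finally, with a pinching family in hand, the conclusion follows from the general convergence theorem (also due to Böhm--Wilking, built on Hamilton's earlier machinery): if the curvature operator of $(M,g_0)$ lies pointwise in the innermost cone of a pinching family, then the normalized Ricci flow exists for all time and converges in $C^\infty$ to a metric $g_\infty$ of constant positive sectional curvature, whence $M=\Sph^n/\Gamma$ is a spherical space form. (The low-dimensional cases are handled directly: a closed surface with positive curvature is $\Sph^2$ or $\R P^2$; and in dimension $3$, two-positivity of the curvature operator is equivalent to $\Ric>0$, so Hamilton's theorem applies.)
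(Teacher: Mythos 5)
The paper does not prove Theorem \ref{thm Bohm-Wilking} itself; it cites Hamilton, Chen, and B\"ohm--Wilking and notes only that the proof runs the normalized Ricci flow to a constant-curvature limit. Your proposal is a faithful and essentially correct outline of exactly that argument (Uhlenbeck's trick, the reaction ODE $R^2+R^{\#}$, invariance of the two-nonnegative cone, the B\"ohm--Wilking pinching family via the maps $\ell_{a,b}$, and the convergence theorem), so it matches the approach the paper invokes.
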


Recall that a Riemannian manifold $(M^n,g)$ is said to have two-positive curvature operator if the sum of the smallest two eigenvalues of the curvature operator $\hat{R}_p: \Lambda^2(T_pM) \to  \Lambda^2(T_pM)$ defined via \begin{equation}\label{eq 1.0}
    \hat{R}(e_i\wedge e_j) =\frac 1 2 \sum_{k,l}R_{ijkl}e_k \wedge e_l
\end{equation}
is positive for any $p\in M$. Here $\Lambda^2(T_pM)$ is the space of two-forms and $\{e_1, \cdots, e_n\}$ is an orthonormal basis of $T_pM$. 
In dimension three, two-positive curvature operator is equivalent to positive Ricci curvature, so Theorem \ref{thm Bohm-Wilking} is due to Hamilton \cite{Hamilton82}, who introduced Ricci flow and used it to prove that three-manifolds with positive Ricci curvature are diffeomorphic to spherical space forms.  
Hamilton \cite{hamilton86} also showed the above differentiable sphere theorem for closed four-manifolds with positive curvature operator, which was extended to two-positive curvature operator by Chen \cite{Chen91}. In higher dimensions, Theorem \ref{thm Bohm-Wilking} is due to B\"ohm and Wilking \cite{BW08}. 
The proof, in all dimensions, uses the normalized Ricci flow to evolve an initial metric with two-positive curvature operator into a limit metric with constant positive sectional curvature. See also \cite{BS08}, \cite{BS09}, \cite{Brendle08}, \cite{AN09}, \cite{NWolfson07}, \cite{NW10} and \cite{BS11} for differentiable sphere theorems under other curvature conditions. 

Rigidity results hold if the curvature operator is nonnegative or two-nonnegative. Gallot and Meyer \cite{GM75} proved that a closed Riemannian manifold with nonnegative curvature operator is either reducible, or locally symmetric, or its universal cover has the cohomology of a sphere or a complex projective space.
Tachibana \cite{Tachibana74} proved that closed Einstein manifolds with nonnegative curvature operator must be locally symmetric. 
More generally, the following rigidity result was obtained by Hamilton \cite{hamilton86} in dimension three, Hamilton \cite{hamilton86} and Chen \cite{Chen91} in dimension four, and Ni and Wu \cite{NWu07} in all higher dimensions (see also Wilking's wonderful survey \cite[Theorem 1.13]{Wilking07}).

\begin{theorem}\label{thm Ni-Wu}
A closed simply-connected Riemannian manifold $(M^n,g)$ with two-nonnegative curvature operator satisfies one of the following statements.
\begin{enumerate}
    \item $M$ is diffeomorphic to $\mathbb{S}^n$;
    \item $n=2m$ and $M$ is a K\"ahler manifold biholomorphic to $\mathbb{CP}^m$;
    \item $M$ is isometric to a compact irreducible symmetric space;
    \item $M$ is isometric to nontrivial Riemannian product.
\end{enumerate}
\end{theorem}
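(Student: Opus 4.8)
The plan is to deform the given metric $g$ by Hamilton's Ricci flow $\partial_t g = -2\Ric$ and to combine a dichotomy coming from the (strong) maximum principle with the holonomy classification. The first step is that two-nonnegativity of the curvature operator is preserved by the Ricci flow on the closed manifold $M$: along the flow the curvature operator evolves by a reaction--diffusion equation whose reaction term is $\Rm^2 + \Rm^{\#}$, so by Hamilton's maximum principle for curvature-type tensors it suffices to show that the convex $\mathrm{O}(n)$-invariant cone $C = \{\Rm : \lambda_1(\Rm) + \lambda_2(\Rm) \ge 0\}$ (sum of the two smallest eigenvalues nonnegative) is preserved by the ODE $\tfrac{d}{dt}\Rm = \Rm^2 + \Rm^{\#}$. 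This is the one genuinely computational ingredient: one bounds the evolution of the two lowest eigenvalues of $\Rm$ by eigenvalue-perturbation formulas, the $\Rm^{\#}$-contribution being controlled by a structure-constant identity, in the spirit of Hamilton's and B\"ohm--Wilking's analysis of invariant curvature cones (in general dimension this invariance is the technical heart of Ni--Wu \cite{NWu07}).

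Running the flow from $g$ gives a dichotomy. If $g(t_0)$ has two-positive curvature operator for some $t_0 > 0$, then by Theorem \ref{thm Bohm-Wilking} $M$ is diffeomorphic to a spherical space form, and, being simply connected, to $\mathbb{S}^n$ --- this is conclusion (1). Otherwise $\min_M(\lambda_1 + \lambda_2)(\cdot,t) = 0$ for every $t > 0$, i.e. the evolving curvature operator stays on the boundary of $C$, and one applies Hamilton's strong maximum principle adapted to the cone $C$: the null structure attached to the degeneracy $\lambda_1 + \lambda_2 = 0$ is invariant under parallel transport and under the flow, and this forces a reduction of the restricted holonomy group of $g(t)$, hence of $g$ by letting $t \downarrow 0$.

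It then remains to run through the possible holonomy reductions. If the holonomy is reducible, the de Rham decomposition theorem writes the closed simply connected $M$ as a nontrivial Riemannian product of closed simply connected factors --- conclusion (4). If the holonomy is irreducible and $M$ is symmetric, then $M$ is a compact irreducible symmetric space --- conclusion (3). If the holonomy is irreducible and $M$ is not symmetric, Berger's list leaves $\mathsf{U}(m)$, $\mathsf{SU}(m)$, $\mathsf{Sp}(m)$, $\mathsf{Sp}(m)\mathsf{Sp}(1)$, $G_2$, $\mathrm{Spin}(7)$, or $\mathsf{SO}(n)$. In the K\"ahler case $\mathsf{U}(m)$ (with $m\ge2$; $m=1$ gives the $2$-sphere), two-nonnegativity forces $\Rm \ge 0$, since the curvature operator of a K\"ahler manifold annihilates a subspace of real dimension at least two; hence $M$ has nonnegative holomorphic bisectional curvature, and Mok's theorem (the generalized Frankel conjecture) identifies the irreducible non-symmetric $M$ biholomorphically with $\mathbb{CP}^m$ --- conclusion (2). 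The Ricci-flat holonomies $\mathsf{SU}(m)$, $\mathsf{Sp}(m)$, $G_2$, $\mathrm{Spin}(7)$ are excluded because a compact Ricci-flat manifold with two-nonnegative curvature operator is flat (a short eigenvalue estimate from $\Ric\equiv0$ forces $\Rm\ge0$, hence $\Rm=0$), contradicting irreducibility; the quaternion-K\"ahler case $\mathsf{Sp}(m)\mathsf{Sp}(1)$ with positive scalar curvature is reduced to the symmetric case by combining its Einstein condition with two-nonnegativity in a Tachibana-type argument \cite{Tachibana74}; and generic holonomy $\mathsf{SO}(n)$ cannot persist on $\partial C$, since the reaction ODE then drives $\Rm$ strictly into the interior of $C$, contradicting the standing assumption --- so that subcase returns to conclusion (1).

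The main obstacle is twofold. First, verifying the invariance of $C$ under $\tfrac{d}{dt}\Rm = \Rm^2 + \Rm^{\#}$: the $\Rm^{\#}$-term makes the boundary computation delicate, $\partial C$ being only piecewise smooth. Second, in the boundary case, extracting from the strong maximum principle enough rigidity to pin down each holonomy type --- in particular excluding generic holonomy and treating the exotic holonomies --- which is harder than in the classical $\Rm \ge 0$ situation because $\partial C$ permits degeneracies with $\lambda_1 < 0$.
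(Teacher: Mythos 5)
The paper does not prove Theorem \ref{thm Ni-Wu}: it is stated as a known background result, attributed to Hamilton in dimension three, Hamilton and Chen in dimension four, and Ni--Wu in all higher dimensions. There is therefore no internal proof to compare against; the comparison can only be with the cited literature.

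Your outline does follow the Ni--Wu strategy at the coarsest level (preserved curvature cone under Ricci flow, a dichotomy between entering the interior of the cone and staying on its boundary, a strong maximum principle in the boundary case, and Berger's holonomy classification to finish). The parts that are correct as stated include the reduction of conclusion (1) to Theorem \ref{thm Bohm-Wilking}, the de Rham step yielding conclusion (4), and the K\"ahler step: on an irreducible K\"ahler manifold of complex dimension $m\geq 2$ the curvature operator vanishes on $\Lambda^{2,0}\oplus\Lambda^{0,2}$, which has real dimension $m(m-1)\geq 2$, so $\lambda_1=\lambda_2=0$ and two-nonnegativity does upgrade to $\hat R\geq 0$; Mok's theorem then applies. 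But there is a genuine gap in the central step. You invoke a ``null structure attached to the degeneracy $\lambda_1+\lambda_2=0$, invariant under parallel transport.'' Hamilton's strong maximum principle of that type applies to $\hat R\geq 0$, where the kernel of $\hat R$ is a parallel sub\-algebra of $\so(n)$. On the boundary of the two-nonnegative cone, one can have $\lambda_1<0<\lambda_2$ with no null vector at all, and the two-plane spanned by the lowest two eigenvectors is in general not parallel and not a Lie subalgebra; there is no off-the-shelf ``null structure'' to transport. The actual content of Ni--Wu (and of the earlier $n=3,4$ cases) is precisely to show, via the reaction ODE $\tfrac{d}{dt}\Rm=\Rm^2+\Rm^{\#}$ and pinching-family techniques in the B\"ohm--Wilking style, that persistence on the boundary forces additional degeneracy (in effect $\hat R\geq 0$, or direct identification of a symmetric or product structure), and only then does the holonomy argument engage. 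Similarly, your exclusion of generic holonomy $\mathsf{SO}(n)$ --- ``the reaction ODE then drives $\Rm$ strictly into the interior of $C$'' --- is a substantive strong-maximum-principle claim of the Brendle--Schoen/Chow--Lu type that needs a proof, not a heuristic; and the treatment of $\mathsf{Sp}(m)\mathsf{Sp}(1)$ by a vague Tachibana-type argument is not self-contained. So as it stands the sketch outlines the right overall architecture but leaves the technically decisive step --- converting boundary persistence of a merely two-nonnegative operator into a parallel structure --- unaddressed.
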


Next, let's recall the second kind of curvature operator.
The Riemann curvature tensor $R_{ijkl}$ also acts naturally on the space of symmetric two-tensors $S^2(T_pM)$ (see for example \cite{CV60}, \cite{BE69} and \cite{BK78}). 
This action, denoted by $\mathring{R}:S^2(T_pM) \to S^2(T_pM)$ in this paper, is defined by 
\begin{equation*}
    \mathring{R}(e_i \odot e_j) =\sum_{k,l}R_{iklj} e_k \odot e_l,
\end{equation*}
where $\odot$ denotes the symmetric product.
The new feature here is that $S^2(T_pM)$ is not irreducible under the action of the orthogonal group $O(T_pM)$. As in \cite{Nishikawa86}, one should consider the induced symmetric bilinear form 
\begin{equation}\label{eq 1.1}
  \mathring{R}:S^2_0(T_pM) \times S^2_0(T_pM) \to \R
\end{equation}
by restricting $\mathring{R}$ to the space of traceless symmetric two-tensors $S^2_0(T_pM)$.  
Following Nishikawa's terminology in \cite{Nishikawa86}, we call the symmetric bilinear form $\mathring{R}$ in \eqref{eq 1.1} the \textit{curvature operator of the second kind}, to distinguish it from the curvature operator $\hat{R}$ defined in \eqref{eq 1.0}, which he called the \textit{curvature operator of the first kind}. 

The action of Riemann curvature tensor on symmetric two-tensors indeed has a long history. It appeared for K\"ahler manifolds in the study of deformation of complex analytic structures by Calabi and Vesentini \cite{CV60}. They introduced the self-adjoint operator $\xi_{\a \b} \to R^{\rho}_{\ \a\b}{}^{\sigma} \xi_{\rho \sigma}$ from $S^2(T^{1,0}_p M)$ to itself, and computed the eigenvalues of this operator on Hermitian symmetric spaces of classical type, with the exceptional ones handled shortly afterward by Borel \cite{Borel60}. In the Riemannian setting, the operator $\mathring{R}$ arises naturally in the context of deformations of Einstein structure in Berger and Ebin \cite{BE69} (see also \cite{Koiso79a, Koiso79b} and \cite{Besse08}). 
In addition, it appears naturally in the Bochner-Weitzenb\"ock formulas for symmetric two-tensors (see for example \cite{MRS20}), for differential forms in \cite{OT79} and for Riemannian curvature tensors in \cite{Kashiwada93}.  
In another direction, curvature pinching estimates for $\mathring{R}$ were studied by Bourguignon and Karcher \cite{BK78}, and they also calculated eigenvalues of $\mathring{R}$ on the complex projective space with the Fubini-Study metric and the quaternionic projective space with its canonical metric.

Given the above-mentioned beautiful theorems for the curvature operator $\hat{R}$, it is an intriguing question to classify closed Riemannian manifolds with positive or nonnegative curvature operator of the second kind.
Indeed, Nishikawa \cite[Conjecture II]{Nishikawa86} proposed the following conjecture in 1986. 
\begin{conjecture}[Nishikawa \cite{Nishikawa86}]
Let $(M^n,g)$ be a closed Riemannian manifold. 
\begin{enumerate}
    \item If $M$ has positive curvature operator of the second kind, then $M$ is diffeomorphic to a spherical space form.
    \item If $M$ has nonnegative curvature operator of the second kind, then $M$ is diffeomorphic to a Riemannian locally symmetric space.
\end{enumerate}
\end{conjecture}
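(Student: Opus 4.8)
The plan is to derive both parts of the conjecture from two sharper facts: (A) a closed Riemannian manifold $(M^n,g)$, $n\ge 3$, with three-positive curvature operator of the second kind is diffeomorphic to a spherical space form; and (B) a closed $(M^n,g)$ with three-nonnegative curvature operator of the second kind is flat, or diffeomorphic to a spherical space form, or isometric to a quotient of a compact irreducible symmetric space. Since positivity of $\mathring{R}$ on $S^2_0$ implies three-positivity and nonnegativity implies three-nonnegativity, and since flat manifolds, spherical space forms and quotients of symmetric spaces are all locally symmetric, the facts (A) and (B) settle the conjecture (the two-dimensional case being elementary).

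The common engine is a pointwise algebraic comparison between the two actions of the curvature tensor: \emph{if $\mathring{R}|_{S^2_0}$ is three-positive (resp.\ three-nonnegative) at a point, then the curvature operator of the first kind $\hat{R}$ on $\Lambda^2$ is two-positive (resp.\ two-nonnegative) there.} I would prove this by contraposition. If $\hat{R}$ is not two-nonnegative, let $\mu_1\le\mu_2$ be its two smallest eigenvalues, realized by an orthonormal pair of two-forms $\phi_1,\phi_2$, so $\mu_1+\mu_2<0$ and hence $2\mu_1+\mu_2<0$. In the model case $\phi_1=e_1\wedge e_2$, $\phi_2=e_3\wedge e_4$, the three mutually orthogonal traceless symmetric two-tensors $h_1=e_1\odot e_1-e_2\odot e_2$, $h_2=e_1\odot e_2+e_2\odot e_1$, $h_3=e_3\odot e_3-e_4\odot e_4$ satisfy, by the algebraic symmetries of $R$, $\langle\mathring{R}(h_1),h_1\rangle=\langle\mathring{R}(h_2),h_2\rangle$ equal to a fixed positive multiple of $\hat{R}(\phi_1,\phi_1)$ and $\langle\mathring{R}(h_3),h_3\rangle$ the same multiple of $\hat{R}(\phi_2,\phi_2)$; thus $\mathring{R}$ restricted to $\operatorname{span}\{h_1,h_2,h_3\}$ has trace a positive multiple of $2\mu_1+\mu_2<0$, so $\mathring{R}$ is not three-nonnegative (the strict case is identical). \emph{The main obstacle is that $\phi_1,\phi_2$ need not be decomposable} — on a complex space form the minimizing two-form is the K\"ahler form — so one must rerun this construction off the Cartan normal forms $\phi_1=\sum_i a_i\,f_{2i-1}\wedge f_{2i}$, etc., manufacture the correct three symmetric test tensors, and invoke the first Bianchi identity to control the cross-block curvature terms. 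Checking that \emph{three} test tensors are precisely what one can afford is what yields the sharp hypothesis and improves the result of Cao--Gursky--Tran, and one should exhibit an algebraic curvature tensor (e.g.\ that of $\CP^2$) to see that three cannot be replaced by two.

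Granting this comparison, (A) follows immediately: a closed $M^n$ with three-positive curvature operator of the second kind then has two-positive curvature operator, so Theorem~\ref{thm Bohm-Wilking} gives that $M$ is diffeomorphic to a spherical space form.

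For (B), the comparison shows $M^n$ has two-nonnegative curvature operator; pass to the universal cover $\widetilde{M}$, which inherits both conditions, and use the analysis underlying Theorem~\ref{thm Ni-Wu} to write $\widetilde{M}=N\times\R^k$ isometrically with $N$ compact simply-connected of one of the four types listed there. Three-nonnegativity of $\mathring{R}$ descends to each de~Rham factor, because the restriction of $\mathring{R}$ to $S^2_0$ of a factor is that factor's own curvature operator of the second kind and the bottom eigenvalues only increase under restriction to a subspace; this eliminates the exceptional cases. First, if $\widetilde{M}$ had two distinct non-flat de~Rham factors, or one non-flat factor together with a Euclidean one, then at a point where $\scal_{N_1}>0$ — a compact manifold with two-nonnegative curvature operator has nonnegative scalar curvature, strictly positive somewhere unless flat — the traceless tensor $a\,g_{N_1}+b\,g_{N_2}$ has $\langle\mathring{R}(\cdot),\cdot\rangle=-a^2\scal_{N_1}-b^2\scal_{N_2}<0$, and together with the two or more null directions $e\odot f$ ($e,f$ tangent to distinct factors) it spans a three-dimensional subspace of negative $\mathring{R}$-trace, a contradiction; in particular $k\ge 1$ forces $M$ flat and at most one non-flat factor can occur. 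Second, the K\"ahler alternative of complex dimension $\ge 2$ is impossible, since such a manifold, being non-flat here, carries three mutually orthogonal negative directions for $\mathring{R}$ built from the complex structure — for instance $e_1\odot e_3+e_2\odot e_4$, $e_1\odot e_4-e_2\odot e_3$ and $e_1\odot e_1+e_2\odot e_2-e_3\odot e_3-e_4\odot e_4$ with $e_2=Je_1$ and $e_4=Je_3$ — so this case reduces to $\CP^1=\Sph^2$. What survives is $\widetilde{M}=\R^n$ (so $M$ flat), $\widetilde{M}$ diffeomorphic to $\Sph^n$ (so $M$ a spherical space form), or $\widetilde{M}$ a compact irreducible symmetric space (so $M$ a quotient of one). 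Combining (A) and (B) proves Nishikawa's conjecture.
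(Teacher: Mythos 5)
Your plan hinges on a pointwise algebraic comparison — three\mbox{-}nonnegative $\mathring{R}$ on $S^2_0$ implies two\mbox{-}nonnegative $\hat{R}$ on $\Lambda^2$ — that is neither proved in your sketch nor established anywhere in the paper, and on whose truth I am skeptical. The model computation works only when the two minimizing eigen\mbox{-}two\mbox{-}forms $\phi_1,\phi_2$ of $\hat{R}$ are decomposable and disjointly supported (so each $\hat{R}(\phi_i,\phi_i)$ is literally a sectional curvature and you can pick your three $h_i$ from the corresponding coordinate blocks). You yourself flag that the extremal two\mbox{-}forms of $\hat{R}$ are generically \emph{not} decomposable (e.g., the K\"ahler form on a complex space form), and your proposed fix — ``rerun this construction off the Cartan normal forms \dots manufacture the correct three symmetric test tensors, and invoke the first Bianchi identity to control the cross\mbox{-}block curvature terms'' — is exactly the step that is missing. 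With a single non\mbox{-}decomposable two\mbox{-}form $\phi=\sum_i a_i\,f_{2i-1}\wedge f_{2i}$ one must control \emph{all} the cross terms $R_{2i-1,2i,2j-1,2j}$ using only three traceless symmetric tensors, and I see no reason a trace over a three\mbox{-}dimensional test subspace should reproduce the full quadratic form $\hat{R}(\phi,\phi)$. If such a comparison were true it would immediately subsume the paper's Lemma~\ref{lemma PIC1} (three\mbox{-}nonnegative $\mathring{R}$ $\Rightarrow$ weakly PIC1), since two\mbox{-}nonnegative $\hat{R}$ implies weakly PIC1; the paper's authors give no indication that any such strengthening is available, and the subsequent literature on this operator is careful to treat the two eigenvalue hierarchies as incomparable.

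What the paper actually does is closely parallel to your plan but stops one rung lower: from three test tensors (and a symmetrization over frames) it extracts only the PIC1\mbox{-}type estimate of Lemma~\ref{lemma PIC1}, then invokes Brendle's classification of closed simply\mbox{-}connected locally irreducible manifolds with weakly PIC1 in place of the Ni--Wu classification under two\mbox{-}nonnegative $\hat{R}$. The Ricci nonnegativity used for the Cheeger--Gromoll splitting comes from a separate estimate (Proposition~\ref{prop algebraic implications}(2)), the exclusion of product factors is Proposition~\ref{prop split}/Theorem~\ref{thm irreducible}, and the exclusion of K\"ahler factors is Theorem~\ref{thm kahler stronger}. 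Your arguments for the last two points are compressed versions of theirs (the traceless $a\,g_{N_1}+b\,g_{N_2}$ is essentially the $\psi$ of Proposition~\ref{prop split}, and your three K\"ahler\mbox{-}frame tensors are the $\lambda=1$ instance of the paper's $\vp_1,\vp_2,\vp_3$), but they are downstream of a comparison lemma that you have not established. Also, as a minor point, you cannot directly invoke Theorem~\ref{thm Ni-Wu} on the universal cover of a non\mbox{-}simply\mbox{-}connected $M$ without first applying Cheeger--Gromoll splitting (which requires the nonnegative Ricci bound from a separate algebraic estimate), as the paper does.
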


The conjecture was made based on two known results. One asserts that $M$ must be a real homology sphere if it has positive curvature operator of the second kind, which is a result of Ogiue and Tachibana \cite{OT79}. The other one says the conjecture holds if $M$ is either Einstein or has harmonic curvature tensor, which is a result of Kashiwada \cite{Kashiwada93}.

To the best of the author's knowledge, no progress was made on Nishikawa's conjecture until the recent work of Cao, Gursky and Tran \cite{CGT21}, in which they proved the positive case of Nishikawa's conjecture under a weaker assumption. 

\begin{theorem}[Cao-Gursky-Tran \cite{CGT21}]\label{thm Cao etc}
A closed Riemannian manifold with two-positive curvature operator of the second kind is diffeomorphic to a spherical space form. 
\end{theorem}

Their key observation is that two-positive curvature operator of the second kind implies the strictly PIC1 condition introduced by Brendle \cite{Brendle08} (see also Definition \ref{def curvature conditions}). 
The positive case of Nishikawa's conjecture follows immediately from Brendle's result in \cite{Brendle08} asserting that the normalized Ricci flow evolves an initial metric satisfying strictly PIC1 into a limit metric with constant positive sectional curvature.

The first purpose of this paper is to prove an improvement of Theorem \ref{thm Cao etc} by weakening the assumption to three-positivity.  The second purpose is to settle the nonnegativity part of Nishikawa's conjecture.

In dimension two, it is easy to see that both positivity and two-positivity (respectively, nonnegativity and two-nonnegativity) of the curvature operator of the second kind are equivalent to positive (respectively, nonnegative) scalar curvature. Thus, Nishikawa's conjecture is a consequence of the uniformization theorem. 
So we begin with dimension three.

\begin{theorem}\label{thm 3d}\footnote{An improvement has been obtained by the author in a subsequent work \cite{Li22JGA} assuming $3\frac 1 3$-positive/nonnegative curvature operator of the second kind.}
Let $(M^3,g)$ be a closed Riemannian manifold of dimension three.
\begin{enumerate}
    \item If $M$ has three-positive curvature operator of the second kind, then $M$ is diffeomorphic to a spherical space form. 
    \item If $M$ has three-nonnegative curvature operator of the second kind, then $M$ is either flat or diffeomorphic to a spherical space form.
\end{enumerate}
\end{theorem}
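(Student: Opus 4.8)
The plan is to reduce the three-dimensional case to information about the Ricci tensor, since in dimension three the full curvature tensor is determined by the Ricci curvature. First I would work out explicitly what ``three-nonnegative curvature operator of the second kind'' means when $n=3$. Here $\dim S^2_0(T_pM) = 5$, so ``three-nonnegative'' means the sum of the smallest three eigenvalues of $\mathring{R}$ on $S^2_0$ is $\geq 0$, equivalently $\operatorname{tr}(\mathring{R}) - (\mu_4 + \mu_5) \geq 0$ where $\mu_4 \leq \mu_5$ are the two largest eigenvalues. Since $\operatorname{tr}_{S^2_0}(\mathring{R})$ is a fixed multiple of the scalar curvature (a computation I would record, using $\mathring{R}(e_i\odot e_j) = \sum_{k,l} R_{iklj} e_k \odot e_l$ and summing over an orthonormal eigenbasis), the condition becomes a lower bound on $\operatorname{Scal}$ in terms of the top two eigenvalues of $\mathring{R}$. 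I would then diagonalize: pick a point $p$ and an orthonormal basis $\{e_1,e_2,e_3\}$ diagonalizing $\Ric$ with eigenvalues $\lambda_1 \leq \lambda_2 \leq \lambda_3$, compute the $6\times 6$ matrix of $\mathring{R}$ on $S^2(T_pM)$ in the basis $\{e_i\odot e_j\}$, restrict to the traceless part, and read off the eigenvalues of $\mathring{R}|_{S^2_0}$ as explicit linear combinations of the $\lambda_i$.

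The expected outcome of that computation is that the three-nonnegativity of $\mathring{R}$ on $S^2_0$ translates into a pointwise inequality on the Ricci eigenvalues that forces $\Ric \geq 0$, and in fact something stronger — I would anticipate it is equivalent (or very close) to two-nonnegativity of the curvature operator of the first kind $\hat R$ in dimension three, whose eigenvalues are $\lambda_2+\lambda_3 - \tfrac{1}{2}\operatorname{Scal}$, etc. (i.e.\ $\lambda_i + \lambda_j - \lambda_k$ type expressions). If I can show three-nonnegative curvature operator of the second kind implies two-nonnegative curvature operator of the first kind when $n=3$, then part (2) follows immediately from Theorem \ref{thm Ni-Wu}: a closed simply connected $3$-manifold with two-nonnegative $\hat R$ is either diffeomorphic to $\mathbb{S}^3$ or a symmetric space or a nontrivial product, and in dimension three the symmetric/product cases are flat or covered by $\mathbb{S}^3$; passing to the universal cover and back gives ``flat or diffeomorphic to a spherical space form.'' Similarly, three-positive $\mathring{R}$ on $S^2_0$ should imply two-positive $\hat R$, and part (1) follows from Theorem \ref{thm Bohm-Wilking}. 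Alternatively, and perhaps more cleanly, three-nonnegativity should already force $\Ric \geq 0$ with the strict version forcing $\Ric > 0$, after which one invokes Hamilton's original theorem on $3$-manifolds with nonnegative (resp.\ positive) Ricci curvature together with a strong maximum principle / holonomy argument to handle the nonnegative case.

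The key steps, in order: (i) compute $\operatorname{tr}_{S^2_0(T_pM)}(\mathring R)$ in terms of $\operatorname{Scal}$; (ii) diagonalize $\mathring R$ on $S^2_0(T_pM)$ in a Ricci-eigenbasis and express its five eigenvalues via $\lambda_1,\lambda_2,\lambda_3$; (iii) show the sum of the smallest three of those equals (a positive multiple of) the sum of the smallest two eigenvalues of $\hat R$, or otherwise directly derive the Ricci pinching inequality; (iv) invoke Theorem \ref{thm Bohm-Wilking} for the positive case and Theorem \ref{thm Ni-Wu} (or Hamilton's $3$-manifold theorems) for the nonnegative case, and in the latter case enumerate which of the conclusions of Theorem \ref{thm Ni-Wu} can occur in dimension three (only $\mathbb{S}^3$ up to quotient, or flat, since the irreducible symmetric spaces and nontrivial products of dimension three are all flat or spherical).

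The main obstacle I expect is step (ii)–(iii): getting the eigenvalues of $\mathring R|_{S^2_0}$ exactly right in dimension three and then correctly identifying the ``sum of the smallest three'' as a function of the ordering of the $\lambda_i$ — the ordering matters because which three eigenvalues are smallest depends on the relative sizes of the $\lambda_i$, so one has to check the resulting inequality is the same (or implies the desired Ricci/first-kind condition) in every chamber. A secondary subtlety in the nonnegative case is the passage from the universal cover back to $M$ and ruling out the genuinely non-spherical, non-flat possibilities; in dimension three this is easy since the only simply connected possibilities from Theorem \ref{thm Ni-Wu} are $\mathbb{S}^3$ (diffeomorphism) and $\mathbb{R}^3$ (flat), but it should be stated carefully.
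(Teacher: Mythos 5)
Your overall strategy—reduce to Ricci curvature and invoke Hamilton—is the right one and is indeed what the paper does, but there are two substantive places where the proposal either overcomplicates or leaves a genuine gap.

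\textbf{On the computation.} You propose to diagonalize $\mathring{R}$ on all of $S^2_0(T_pM)$ in a Ricci-eigenbasis, get its five eigenvalues as explicit functions of $\lambda_1,\lambda_2,\lambda_3$, and then carefully analyze which three are smallest in every chamber. You correctly flag this ordering issue as the main obstacle. The paper sidesteps it entirely: it never diagonalizes. It fixes the orthonormal basis $\vp_1,\dots,\vp_5$ of $S^2_0$ built from $e_i\odot e_j$ and $e_i\odot e_i - e_j\odot e_j$ type tensors, computes each $\mathring{R}(\vp_i,\vp_i)$ directly (each is a simple combination of sectional curvatures, e.g.\ $\mathring{R}(\vp_1,\vp_1)=R_{1212}$), and then uses the variational characterization: the sum of the $k$ smallest eigenvalues is $\leq \sum_{i\in I}\mathring{R}(\vp_i,\vp_i)$ for any $k$-element orthonormal set. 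Choosing $\{\vp_2,\vp_3,\vp_5\}$ gives, after a short calculation, $0\leq 2(R_{33}-S/12)$, hence $\Ric\geq S/12\geq 0$ with no case analysis at all. This is both cleaner and gives a stronger conclusion than bare $\Ric\geq 0$.

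\textbf{On the nonnegative case.} Here is the actual gap. You are hoping that in dimension three, three-nonnegative $\mathring{R}$ is essentially equivalent to two-nonnegative $\hat{R}$ (equivalently, to $\Ric\geq 0$), after which Theorem \ref{thm Ni-Wu} or Hamilton classifies. But this equivalence is false, and the failure is exactly what matters: $\Sph^2\times\R$ (and its quotients like $\Sph^2\times\Sph^1$, $\mathbb{RP}^3\#\mathbb{RP}^3$) have $\Ric\geq 0$ and nonnegative $\hat{R}$, so they survive any argument built only on $\Ric\geq 0$; yet they are neither flat nor covered by $\Sph^3$, so part (2) would be \emph{wrong} for them. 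What rules them out is that $\Sph^2\times\R$ does \emph{not} have three-nonnegative $\mathring{R}$ (the sum of the three smallest eigenvalues is $-1/3$; see Example \ref{eg cylinder} with $n=3$). The extra strength of three-nonnegative $\mathring{R}$ over $\Ric\geq 0$ is precisely the pinching $\Ric\geq S/12$, which immediately yields the rigidity statement: if some Ricci eigenvalue vanishes at a point, then $S=0$ there, hence $\Ric=0$, hence (in dimension three) $R=0$. The paper uses this to show a locally reducible, non-flat example cannot occur, and then invokes Hamilton's classification of locally irreducible closed three-manifolds with $\Ric\geq 0$. Your ``strong maximum principle / holonomy argument'' gestures in this direction but does not supply the mechanism, and without the $S/12$ pinching (or equivalent) there is no such mechanism: $\Ric\geq 0$ alone simply does not exclude $\Sph^2\times\R$. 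This is the one place where the proposal, as written, would fail if carried out naively.
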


To prove this theorem, we show in Proposition \ref{prop 3d} that in dimension three,  three-positive (respectively, three-nonnegative) curvature operator of the second kind implies $\Ric > \frac{S}{12} >0$ (respectively, $\Ric \geq  \frac{S}{12} \geq 0$), where $S$ denotes the scalar curvature.
Part (1) of Theorem \ref{thm 3d} then follows from Hamilton's classification of closed three-manifolds with positive Ricci curvature. To prove part (2) of Theorem \ref{thm 3d}, we first observe that the manifold must be locally irreducible if it is not flat, and then apply Hamilton's classification of closed three-manifolds with nonnegative Ricci curvature \cite{hamilton86}. 

We would like to point out that the curvature assumption in Theorem \ref{thm 3d} is optimal, in the sense that the theorem fails if one only assumes four-positive or four-nonnegative curvature operator of the second kind. In particular, the product manifold $\mathbb{S}^2 \times \mathbb{S}^1$ has four-positive curvature operator of the second kind, but it does not have three-nonnegative curvature operator of the second kind (see Example \ref{eg cylinder}). 

In dimensions four and above, we prove that
\begin{theorem}\label{thm 4+}
A closed Riemannian manifold with three-positive curvature operator of the second kind is diffeomorphic to a spherical space form.
\end{theorem}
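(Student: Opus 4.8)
The plan is to follow the strategy already outlined in the introduction: reduce Theorem \ref{thm 4+} to Brendle's Ricci flow convergence result \cite{Brendle08} by showing that a closed Riemannian manifold $(M^n,g)$ with three-positive curvature operator of the second kind satisfies the strictly PIC1 condition. Since Brendle proved that the normalized Ricci flow deforms any metric satisfying strictly PIC1 to a round metric, and hence $M$ is diffeomorphic to a spherical space form, the entire content of the theorem lies in the pointwise algebraic implication
\begin{equation*}
\text{three-positive } \mathring R \ \Longrightarrow \ \text{strictly PIC1}.
\end{equation*}
The case $n=3$ is already covered by Theorem \ref{thm 3d}(1) (in three dimensions strictly PIC1 is automatic once the Ricci curvature is positive), so I would concentrate on $n\ge 4$.

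The heart of the argument is thus a purely linear-algebraic statement about algebraic curvature operators. Recall that strictly PIC1 means that for every orthonormal four-frame $\{e_1,e_2,e_3,e_4\}$ and every $\lambda\in[0,1]$ one has
\begin{equation*}
R_{1313}+\lambda^2 R_{1414}+R_{2323}+\lambda^2 R_{2424}-2\lambda R_{1234}>0.
\end{equation*}
The key step is to exhibit, for each such frame and each $\lambda$, an explicit element (or a small family of elements) $\phi\in S^2_0(T_pM)$ built out of $e_1,\dots,e_4$ whose eigenvalues for $\mathring R$ can be estimated: if $\mathring R$ is three-positive, then the sum of its three smallest eigenvalues is positive, and by choosing $\phi$ to lie in (or be controlled by) a carefully chosen three-dimensional subspace of $S^2_0$, the quantity $\langle \mathring R\, \phi,\phi\rangle$ — or an average of $\langle \mathring R\, \phi_i,\phi_i\rangle$ over a spanning set — becomes exactly (a positive multiple of) the PIC1 expression above. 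Concretely, one expects to use symmetric two-tensors of the form $e_1\odot e_3$, $e_2\odot e_4$, $e_1\odot e_4\pm e_2\odot e_3$, and the trace-corrected combinations $e_1\odot e_1-e_2\odot e_2$, etc., possibly rescaled by $\lambda$; computing $\mathring R$ on these using $\mathring R(e_i\odot e_j)=\sum_{k,l}R_{iklj}e_k\odot e_l$ produces precisely the sectional-curvature and $R_{1234}$ terms appearing in the PIC1 inequality. The three-positivity hypothesis is what lets us absorb the (a priori uncontrolled) fourth and higher eigenvalue contributions.

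The main obstacle I anticipate is the bookkeeping needed to make the three eigenvalues line up correctly: one must produce a three-dimensional subspace $V\subset S^2_0(T_pM)$ such that $\operatorname{tr}(\mathring R|_V)$ equals a positive constant times the PIC1 quantity for the given frame and the given $\lambda$, \emph{and} verify that the orthogonal complement of $V$ does not secretly contribute negatively — equivalently, that the restriction of $\mathring R$ to $V$ is bounded below by the sum of the three lowest eigenvalues of $\mathring R$, which is automatic once $V$ is genuinely three-dimensional. Handling the parameter $\lambda\in[0,1]$ uniformly (the endpoints $\lambda=0,1$ are easier; interior $\lambda$ require the mixed term $-2\lambda R_{1234}$, which forces the use of tensors mixing the "$13$"/"$24$" and "$14$"/"$23$" planes with weight $\lambda$) is where the computation is most delicate. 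Once this algebraic lemma is established, the proof concludes immediately: three-positive $\mathring R$ gives strictly PIC1, Brendle's theorem \cite{Brendle08} applies, and $M$ is diffeomorphic to a spherical space form. \qed
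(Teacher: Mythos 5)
Your overall strategy is identical to the paper's: reduce to showing three‐positive $\mathring R\implies$ strictly PIC1, and then invoke Brendle's \cite{Brendle08} convergence theorem. You also correctly guess the flavor of test tensors — symmetric products like $e_1\odot e_3$, $\lambda\, e_2\odot e_4$, $\lambda\, e_1\odot e_4 \pm e_2\odot e_3$, and a trace-free diagonal correction — and you correctly note that once an orthonormal triple $\{\varphi_1,\varphi_2,\varphi_3\}\subset S^2_0$ is in hand, three-positivity forces $\sum_i\mathring R(\varphi_i,\varphi_i)>0$ by the min–max characterization of partial traces.

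The gap is that the actual algebraic lemma (the paper's Lemma~\ref{lemma PIC1}) is left as a to-do item, and your envisioned one-step version of it — find one three-dimensional subspace $V$ with $\operatorname{tr}(\mathring R|_V)$ proportional to the PIC1 quantity — does not quite work. The paper takes
\begin{equation*}
\varphi_1 = e_1\odot e_1+\lambda\, e_2\odot e_2 - e_3\odot e_3 - \lambda\, e_4\odot e_4,\quad
\varphi_2 = \lambda\, e_1\odot e_4 - e_2\odot e_3,\quad
\varphi_3 = e_1\odot e_3 + \lambda\, e_2\odot e_4,
\end{equation*}
which is the right triple, but $\sum_i \mathring R(\varphi_i,\varphi_i)$ produces the PIC1-type terms \emph{plus} extraneous contributions involving $R_{1212}$, $R_{3434}$, $R_{1414}$, $R_{2323}$ with coefficient $2\lambda$. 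These do not have good sign; the paper kills them by applying the same estimate to the reindexed frames $(-e_2,e_1,e_3,e_4)$ (with $\lambda\mapsto-\lambda$) and then again, and summing the resulting four inequalities so that the unwanted terms cancel. Moreover, what ultimately drops out is the stronger bound $R_{1313}+\lambda^2 R_{1414}+R_{2323}+\lambda^2 R_{2424} - 4\lambda R_{1234}\ge 0$ for \emph{all} $\lambda\in\R$ (not merely $[-1,1]$), and one still has to observe that this, together with the positivity of the partial Ricci sums $R_{1313}+R_{2323}$ (from the $\lambda=0$ case), implies the PIC1 inequality with coefficient $2\lambda$. So your scaffolding matches the paper's, but the "bookkeeping" you defer is genuinely the content of the proof, and it requires an averaging argument beyond exhibiting a single subspace.
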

It was shown in \cite{CGT21} that two-positive curvature operator of the second kind implies strictly PIC1. We strengthen their result by showing that three-positive curvature operator of the second kind implies strictly PIC1 (see Proposition \ref{prop algebraic implications}). Theorem \ref{thm 4+} then follows from this improvement and Brendle's result \cite{Brendle08}. 

The corresponding rigidity result states
\begin{theorem}\label{thm Nishikawa}
Let $(M^n,g)$ be a closed Riemannian manifold of dimension $n\geq 4$. If $M$ has three-nonnegative curvature operator of the second kind, then one of the following statements holds: 
\begin{enumerate}
    \item $(M,g)$ is flat;
    \item $M$ is diffeomorphic to a spherical space form;
    \item $n\geq 5$ and the universal cover of $M$ is isometric to a compact irreducible symmetric space \footnote{This possibility has been ruled out by Nienhaus, Petersen and Wink \cite{NPW22}, as they proved that a closed $n$-dimensional Riemannian manifold with $ \frac{n+2}{2}$-nonnegative curvature operator of the second kind is either a rational homology sphere or flat.}.
\end{enumerate}
\end{theorem}

Theorem \ref{thm Nishikawa} settles the nonnegativity part of Nishikawa's conjecture under the weaker assumption of three-nonnegative curvature operator of the second kind. 
The key idea is to reduce the problem to the locally irreducible case and make use of the classification of closed locally irreducible Riemannian manifolds with weakly PIC1. For this purpose, we show that 

\begin{theorem}\label{thm irreducible}
Let $(M^n,g)$ be an $n$-dimensional non-flat complete Riemannian manifold with $n$-nonnegative curvature operator of the second kind. Then $M$ is locally irreducible \footnote{Some improvements of this result have been obtained in \cite{NPWW22} and \cite{Li22product}.}.
\end{theorem}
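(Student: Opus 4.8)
Let me think about this carefully. We want: an $n$-dimensional non-flat complete Riemannian manifold with $n$-nonnegative curvature operator of the second kind is locally irreducible.

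The natural approach: suppose $M$ is locally reducible. Then locally (by de Rham), $M$ splits as a Riemannian product $M_1^{n_1} \times M_2^{n_2}$ with $n_1, n_2 \geq 1$ and $n_1 + n_2 = n$. We want to derive a contradiction unless $M$ is flat.

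Key idea: on a product, the curvature operator of the second kind has lots of kernel / negative directions coming from "mixed" symmetric 2-tensors. Specifically, if $X$ is tangent to $M_1$ and $Y$ tangent to $M_2$, then $X \odot Y$ is a traceless symmetric 2-tensor on which $\mathring{R}$ vanishes (since curvature has no mixed components). So $\mathring{R}$ has a kernel of dimension at least $n_1 n_2$ on $S^2_0$. That alone doesn't contradict $n$-nonnegativity directly — we need to count more carefully.

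The plan is to argue by contradiction using the de Rham decomposition. If $M$ is not locally irreducible, then the restricted holonomy group acts reducibly on the tangent space, so by the local de Rham decomposition theorem every point $p$ has a neighborhood isometric to a nontrivial Riemannian product $M_1^{n_1}\times M_2^{n_2}$ with $n_1,n_2\ge 1$ and $n_1+n_2=n$. Identifying $T_pM=V_1\oplus V_2$ accordingly, recall that the curvature tensor is then ``block diagonal'': every component $R_{ijkl}$ whose four indices are not all in $V_1$ or all in $V_2$ vanishes. The goal is to show that all sectional curvatures vanish on such a neighborhood; since every point of $M$ has such a neighborhood, this forces $M$ to be flat, which is the contrapositive of the assertion.

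The key structural input is that on a product the curvature operator of the second kind has a large kernel of ``mixed'' tensors. If $\{X_a\}$ is an orthonormal basis of $V_1$ and $\{Y_b\}$ one of $V_2$, then each $X_a\odot Y_b$ is traceless, these $n_1n_2$ tensors are mutually orthogonal, and $\mathring{R}(X_a\odot Y_b)=0$, since the defining formula only pairs curvature components supported in a single factor against $X_a\odot Y_b$, which lives in $V_1\otimes V_2$. The elementary inequality $(n_1-1)(n_2-1)\ge 0$ gives $n_1n_2\ge n-1$, so we always have at least $n-1$ mutually orthogonal null directions of $\mathring{R}$ inside $S^2_0(T_pM)$. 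I will adjoin one more carefully chosen traceless symmetric $2$-tensor $h$ to obtain an $n$-dimensional subspace $W\subset S^2_0(T_pM)$ on which $\mathring{R}$ has negative trace; by the variational characterization $\mu_1+\cdots+\mu_n=\min_{\dim W=n}\tr(\mathring{R}|_W)$ (Ky Fan), this contradicts $n$-nonnegativity.

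To choose $h$, suppose some sectional curvature is nonzero at a point $q$ of the splitting neighborhood; since sectional curvatures determine the curvature tensor, non-flatness near $p$ produces such a $q$. If some plane at $q$ has negative sectional curvature, then, because the sectional curvature of a plane in a product is a sum of curvature-tensor values of the two factors, one factor, say $M_1$, has orthonormal $e_i,e_j\in V_1$ with $K(e_i,e_j)<0$; I take $h=e_i\odot e_j$ and use the identity $\langle\mathring{R}(e_i\odot e_j),e_i\odot e_j\rangle=\tfrac12 K(e_i,e_j)$, giving $\tr(\mathring{R}|_W)=K(e_i,e_j)<0$. Otherwise all sectional curvatures at $q$ are $\ge 0$ and one is positive, so $\Scal(M_i)(q)\ge 0$ for both factors while $\Scal(M_1)(q)+\Scal(M_2)(q)>0$; I take $h=n_2\,g_1-n_1\,g_2$ (the factor metrics extended by zero), which is traceless and orthogonal to every mixed tensor, and use $\langle\mathring{R}(g_i),g_i\rangle=-\Scal(M_i)$ together with $\langle\mathring{R}(g_1),g_2\rangle=0$ to get $\langle\mathring{R}(h),h\rangle=-(n_2^2\Scal(M_1)+n_1^2\Scal(M_2))<0$, hence again $\tr(\mathring{R}|_W)<0$. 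Either way we reach the desired contradiction, so all sectional curvatures vanish on the neighborhood, and $M$ is flat.

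The main obstacle, and the only genuinely computational part, is verifying the two quadratic-form identities $\langle\mathring{R}(e_i\odot e_j),e_i\odot e_j\rangle=\tfrac12 K(e_i,e_j)$ and $\langle\mathring{R}(g_i),g_i\rangle=-\Scal(M_i)$ in the normalization fixed in Section 2, and checking that the chosen $h$ is orthogonal to, and linearly independent from, the $n-1$ selected mixed tensors, so that $\dim W=n$ and the trace of $\mathring{R}|_W$ splits off as the single term coming from $h$. A secondary point requiring care is the reduction step itself: ``$M$ not locally irreducible'' must be interpreted as ``the restricted holonomy representation is reducible'', so that a nontrivial product neighborhood exists around \emph{every} point and the pointwise vanishing of sectional curvature propagates to all of $M$.
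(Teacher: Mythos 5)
Your proof is correct and follows essentially the same route as the paper: both exploit that the $n_1n_2\ge n-1$ mixed tensors $X_a\odot Y_b$ are null directions of $\mathring{R}$ and then adjoin one more well-chosen traceless tensor (either $e_i\odot e_j$ within a factor, or the weighted difference $n_2g_1-n_1g_2$ of the factor metrics, which is precisely the paper's test tensor $\psi$ up to normalization) to produce an $n$-dimensional subspace with negative trace, contradicting $n$-nonnegativity. The paper packages the same idea as a separate Proposition under the sharper hypothesis of $(k(n-k)+1)$-nonnegativity, first proving each factor has nonnegative sectional curvature and then forcing the scalar curvatures to vanish, which your case split reproduces in contrapositive form; the only cosmetic slip is the constant in $\langle\mathring{R}(e_i\odot e_j),e_i\odot e_j\rangle$, which equals $2K(e_i,e_j)$ rather than $\tfrac12 K(e_i,e_j)$ in the paper's normalization, but this does not affect the sign argument.
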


Note that the assumption here cannot be weakened to $(n+1)$-nonnegative curvature operator of the second kind, as the product manifold $\mathbb{S}^{n-1}\times \mathbb{S}^1$ has $(n+1)$-nonnegative curvature operator but it is reducible (see Example \ref{eg cylinder}).

It is well-known that nonnegative curvature operator preserves product manifolds, in the sense that if $M$ is isometric to $M_1 \times M_2$, then $M$ has nonnegative curvature operator if and only if both $M_1$ and $M_2$ have nonnegative curvature operator. However, Theorem \ref{thm irreducible} implies that the product manifold $M_1 \times M_2$ has $n$-nonnegative curvature operator of the second kind if and only if both $M_1$ and $M_2$ are flat.

It follows from the calculation in \cite{BK78} that the complex projective space $\mathbb{CP}^m$ with the Fubini-Study metric does not have four-nonnegative curvature operator of the second kind. We prove a more general result here. 
\begin{theorem}\label{thm kahler}
A K\"ahler manifold with four-nonnegative curvature operator of the second kind is flat \footnote{The curvature operator of the second kind on K\"ahler manifolds has been further investigated in \cite{Li22PAMS, Li22Kahler} and \cite{NPWW22}.}. 
\end{theorem} 

Compact Hermitian symmetric spaces are examples of K\"ahler manifolds with nonnegative curvature operator, but none of them has four-nonnegative curvature operator of the second kind, according to the above theorem.
We also point out that the assumption in Theorem \ref{thm kahler} cannot be weakened to five-nonnegative curvature operator of the second kind in general, as $\mathbb{CP}^2$ has five-positive curvature operator of the second kind (see Example \ref{eg complex projective space}).

The paper is organized as follows. 
In Section 2, we give an introduction to the curvature operator of the second kind, state some conventions and definitions, and collect some examples on which the eigenvalues of the curvature operator of the second kind can be computed explicitly. 
In Section 3, we examine the curvature operator of the second kind in dimension three and prove Theorem \ref{thm 3d}. 
Section 4 is devoted to establishing various algebraic relations between the curvature operator of the second kind and other frequently used curvature conditions such as sectional curvature, Ricci curvature and isotropic curvature. 
In Section 5, we study the curvature operator of the second kind on product manifolds and prove Theorem \ref{thm irreducible}.
In Section 6, we prove Theorem \ref{thm kahler} under a slightly weaker assumption. 
Finally, the proof of Theorem \ref{thm Nishikawa} is presented in Section 7.

\section{The Curvature Operator of the second kind}

We give an introduction to the curvature operator of the second kind in this section, and the reader is referred to \cite{CV60, OT79, BK78, Nishikawa86, Kashiwada93, CGT21} for more information and previous results.

Let $(V,g)$ be a Euclidean vector space of dimension $n\geq 2$.  We always identify $V$ with its dual space $V^*$ via the metric $g$. 
The space of bilinear forms on $V$ is denote by $T^2(V)$, and it splits as 
\begin{equation*}
    T^2(V)=S^2(V)\oplus  \Lambda^2(V),
\end{equation*}
where $S^2(V)$ is the space of symmetric two-tensors on $V$ and $\Lambda^2(V)$ is the space of two-forms on $V$.
Our conventions on symmetric products and wedge products are that, for $u$ and $v$ in $V$, $\odot$ denotes the symmetric product defined by 
\begin{equation*}
    u \odot v  =u \otimes v + v \otimes u,
\end{equation*}
and 
$\wedge$ denotes the wedge product defined by 
\begin{equation*}
    u \wedge v  =u \otimes v -v \otimes u.
\end{equation*}
The inner product $g$ on $V$ naturally induces inner products on $S^2(V)$ and $\Lambda^2(V)$, respectively. 
To be consistent with \cite{CGT21}, the inner product on $S^2(V)$ is defined as 
\begin{equation*}
    \langle A, B \rangle =\tr(A^T B),
\end{equation*}
and the inner product on $\Lambda^2(V)$ is defined as 
\begin{equation*}
    \langle A, B \rangle =\frac{1}{2}\tr(A^T B).
\end{equation*}
In particular, if $\{e_1, \cdots, e_n\}$ is an orthonormal basis for $V$, then 
$\{e_i \wedge e_j\}_{1\leq i < j \leq n}$ is an orthonormal basis for $\Lambda^2(V)$ and $\{\frac{1}{\sqrt{2}}e_i \odot e_j\}_{1\leq i < j \leq n} \cup \{\frac{1}{2} e_i \odot e_i\}_{1\leq i\leq n}$ is an orthonormal basis for $S^2(V)$.

The space of symmetric two-tensors on $\Lambda^2(V)$ has the orthogonal decomposition 
\begin{equation*}
    S^2(\Lambda^2 (V))=S^2_B(\Lambda^2 (V)) \oplus \Lambda^4 (V),
\end{equation*}
where $S^2_B(\Lambda^2 (V))$ consists of all tensors $R\in S^2(\Lambda^2 (V))$ that also satisfy the first Bianchi identity. 
$S^2_B(\Lambda^2 (V))$ is called the space of algebraic curvature operators. 

By the symmetries of $R\in  S^2_B(\Lambda^2 (V))$ (not including the first Bianchi identity), there are (up to sign) two ways that $R$ can induce a symmetric linear map $R:T^2(V) \to T^2(V)$. 
The first one, denoted by $\hat{R}: \Lambda^2(V) \to \Lambda^2(V)$ in this paper, is the so-called curvature operator defined by
\begin{equation}\label{eq R hat}
    \hat{R}(e_i\wedge e_j) =\frac 1 2 \sum_{k,l}R_{ijkl}e_k \wedge e_l,
\end{equation}
where $\{e_1, \cdots, e_n\}$ is an orthonormal basis of $V$. 
Note that if the eigenvalues of $\hat{R}$ are all greater than or equal to $\kappa \in \R$, then all the sectional curvatures of $R$ are bounded from below by $\kappa$.  
We say $R\in S^2_B(\Lambda^2 (V))$ has positive (respectively, nonnegative) curvature operator if all the eigenvalues of $\hat{R}$ are positive (respectively, nonnegative). 
For $2\leq k \leq \binom{n}{2}$, we say $R\in S^2_B(\Lambda^2 (V))$ has $k$-positive (respectively, $k$-nonnegative) curvature operator if the sum of the smallest $k$ eigenvalues of $\hat{R}$ is positive (respectively, nonnegative). 
We say a Riemannian manifold has positive (respectively, nonnegative, $k$-positive,  $k$-nonnegative) curvature operator if the Riemann curvature tensor at each point has positive (respectively, nonnegative, $k$-positive, $k$-nonnegative) curvature operator.

The second one, denoted by $\mathring{R}:S^2(V) \to S^2(V)$, is defined by 
\begin{equation}\label{eq R ring}
    \mathring{R}(e_i \odot e_j) =\sum_{k,l}R_{iklj} e_k \odot e_l.
\end{equation}
However, on contrary to the case of $\hat{R}$, all eigenvalues of $\mathring{R}$ being nonnegative implies all the sectional curvatures of $R$ are zero, that it, $R\equiv 0$ \footnote{This follows from the observation that the trace of $\mathring{R}:S^2(V) \to S^2(V)$ is equal to $\frac{S}{2}$ and $\mathring{R}(g,g)=-S$.}. 
The new feature here is that $S^2(V)$ is not irreducible under the action of the orthogonal group $O(V)$ of $V$. 
The space $S^2(V)$ splits into $O(V)$-irreducible subspaces as 
\begin{equation*}
    S^2(V)=S^2_0(V) \oplus \R g,
\end{equation*}
where $S^2_0(V)$ denotes the space of traceless symmetric two-tensors on $V$.
The map $\mathring{R}$ defined in \eqref{eq R ring} then induces a symmetric bilinear form
\begin{equation}\label{eq 2.3}
    \mathring{R}:S^2_0(V) \times S^2_0(V) \to \R
\end{equation}
by restriction to $S^2_0(V)$. 
Note that if all the eigenvalues of $\mathring{R}$ restricted to $S^2_0(V)$ are bounded from below by $\kappa \in \R$, then the sectional curvatures of $R$ are bounded from below by $\kappa$. 
It should be noted that $\mathring{R}$ does not preserve the subspace $S^2_0(V)$ in general, but it does, for instance, when $R$ is Einstein.

Following \cite{Nishikawa86}, we call the symmetric bilinear form $\mathring{R}$ in \eqref{eq 2.3} the \textit{curvature operator of the second kind}, to distinguish it from the map $\hat{R}$ defined in \eqref{eq R hat}, which he called the \textit{curvature operator of the first kind}.

\begin{remark}
It was pointed out in \cite{NPW22} that the curvature operator of the second kind can also be interpreted as the self-adjoint operator $\pi \circ \mathring{R}:S^2_0(V) \to S^2_0(V)$, where $\pi$ is the projection map from $S^2(V)$ onto $S^2_0(V)$. This is equivalent to the interpretation as the symmetric bilinear form in \eqref{eq 2.3}, as 
\begin{equation*}
    \mathring{R}(\vp,\psi)=\langle \mathring{R}(\vp), \psi\rangle =\langle \pi \circ \mathring{R} (\vp), \psi \rangle=  (\pi \circ \mathring{R})(\vp,\psi)
\end{equation*}
for any $\vp, \psi \in S^2_0(V)$.  
\end{remark}

Let $N=\dim(S^2_0(V))=\frac{(n-1)(n+2)}{2}$ and $\{\vp_i\}_{i=1}^N$ be an orthonormal basis of $S^2_0(V)$. The $N\times N$ matrix $\mathring{R}(\vp_i, \vp_j)$ is called the matrix representation of $\mathring{R}$ with respect to the orthonormal basis $\{\vp_i\}_{i=1}^N$. 
The eigenvalues of $\mathring{R}$ refers to the eigenvalues of any of its matrix representation. 
This is independent of the choices of the orthonormal bases because matrix representations of $\mathring{R}$ with respect to different orthonormal bases of $S^2_0(V)$ are similar to each other.
We then make the following definitions. 
\begin{definition}
    For a positive integer $1\leq k \leq N$, we say $R\in S^2_B(\Lambda^2(V))$ has $k$-nonnegative (respectively, $k$-positive) curvature operator of the second kind if the sum of the smallest $k$-eigenvalues of $\mathring{R}$ is nonnegative (respectively, positive). 
\end{definition}
\begin{definition}
    A Riemannian manifold $(M^n,g)$ is said to have $k$-nonnegative (respectively, $k$-positive) curvature operator of the second kind if $R_p \in S^2_B(\Lambda^2 T_pM)$ has $k$-nonnegative (respectively, $k$-positive) curvature operator of the second kind for each $p\in M$.  
\end{definition}

We conclude this section by collecting some examples on which the eigenvalues of $\mathring{R}$ are known. These examples are used to demonstrate the sharpness of many results in this paper.

\begin{example}\label{eg sphere}
Let $(\mathbb{S}^n,g_0)$ be the $n$-sphere with constant sectional curvature $1$. 
Its Riemann curvature tensor is given by $R_{ijkl}=g_{ik}g_{jl}-g_{il}g_{jk}$.
For any traceless symmetric two-tensor $\vp$, we have 
\begin{align*}
    \mathring{R}(\vp,\vp)&=\sum_{i,j,k,l=1}^n R_{ijkl}\vp_{il}\vp_{jk} \\
    & =\sum_{i,j,k,l=1}^n (g_{ik}g_{jl}-g_{il}g_{jk})\vp_{il}\vp_{jk} \\
    & =|\vp|^2 -\tr(\vp)^2 \\
    & = |\vp|^2.
\end{align*}
Thus $\mathring{R}$ is equal to the identity map on $S^2_0(T_p\mathbb{S}^n)$ at any point $p\in \mathbb{S}^n$ and all eigenvalues of $\mathring{R}$ restricted to $S^2_0(T_p\mathbb{S}^n)$ are equal to $1$. 
\end{example}

\begin{example}\label{eg complex projective space}
Let $(\mathbb{CP}^m,g_{FS})$ be the complex projective space of complex dimension $m$ with the Fubini-Study metric $g_{FS}$. Then 
$\mathring{R}$ restricted to $S^2_0(T_p\mathbb{CP}^m)$ has two distinct eigenvalues: $-2$ with multiplicity $(m-1)(m+1)$ and $4$ with multiplicity $m(m+1)$. These eigenvalues, together with their corresponding eigenspaces, are determined in \cite{BK78}. 
In particular, $\mathbb{CP}^2$ has five-positive (but not four-positive) curvature operator of the second kind.
\end{example}

\begin{example}\label{eg cylinder}
Let $M=\mathbb{S}^{n-1} \times \R$, with $\mathbb{S}^{n-1}$ being the $(n-1)$-sphere with constant sectional curvature $1$. 
The eigenvalues of $\mathring{R}$ restricted to $S^2_0$ are given by 
\begin{align*}
    \l_1 &=-\frac{n-2}{n},   \text{ with multiplicity } 1; \\
    \l_2 &=0,  \text{ with multiplicity } n-1; \\
    \l_3 &=1,  \text{ with multiplicity } \frac{(n-2)(n+1)}{2}.
\end{align*}
The eigenvector associated with $\l_1$ is given by
$\sum_{i=1}^{n-1} e_i \odot e_i -(n-1)e_n \odot e_n,$ where $\{e_1,\cdots,e_{n-1}\}$ is an orthonormal basis of $T_p\mathbb{S}^{n-1}$ and $e_n$ is a unit vector in $T_q \R$.
The eigenspace of $\l_2$ has dimension $n-1$ and it is spanned by vectors of the form $u \odot v$ with $u \in T_p\mathbb{S}^{n-1}$ and $v\in T_q \R$. 
The eigenspace of $\l_3$ is the space of traceless symmetric two-tensors on the $\mathbb{S}^{n-1}$ factor, whose dimension is $\frac{(n-2)(n+1)}{2}$. 
In particular, $\mathbb{S}^{n-1} \times \R$ has $(n+1)$-positive (but not $n$-positive) curvature operator of the second kind.
\end{example}

\begin{example}\label{eg product of spheres}
Let $M=\mathbb{S}^{2} \times \mathbb{S}^{2}$ with $\mathbb{S}^{2}$ being the $2$-sphere with constant sectional curvature $1$. 
The eigenvalues of $\mathring{R}$ are given by $\l_1=-1$ with multiplicity one, $\l_2=0$ with multiplicity $4$ and $\l_3=1$ with multiplicity $4$ (see \cite{CGT21}). If we pick an orthonormal basis $\{e_1, e_2, e_3, e_4\}$ of $T_pM$ with $e_1,e_2$ in the tangent space of the first $\mathbb{S}^2$ factor and $e_3,e_4$ in the tangent space of the second $\mathbb{S}^2$ factor, then the corresponding eigenspaces are given by
\begin{eqnarray*}
E_1 &=& \spann \{e_1 \odot e_1 +e_2 \odot e_2 -e_3 \odot e_3 -e_4 \odot e_4 \}, \\
E_2 &=& \spann \{e_1\odot e_3, e_1\odot e_4, e_2\odot e_3, e_2\odot e_4 \},  \\
E_3 &=& \spann \{e_1 \odot e_2, e_3 \odot e_4, e_1 \odot e_1 -e_2 \odot e_2, e_3 \odot e_3- e_4 \odot e_4  \}.
\end{eqnarray*}
In particular, $\mathbb{S}^{2} \times \mathbb{S}^{2}$ has six-nonnegative (but not five-nonnegative) curvature operator of the second kind.
\end{example}

\section{Dimension Three}

In this section, we investigate the curvature operator of the second kind in dimension three. 

Let $(V,g)$ be a three-dimensional Euclidean vector space and let $\{e_1,e_2,e_3\}$ be an orthonormal basis for $V$. 
The space of traceless symmetric two-tensors $S^2_0(V)$ has dimension $5$, and we can choose an orthonormal basis for it as follows.
\begin{eqnarray*}
\vp_1 &=& \frac{1}{\sqrt{2}} e_1 \odot e_2, \\
\vp_2 &=& \frac{1}{\sqrt{2}} e_1 \odot e_3, \\
\vp_3 &=& \frac{1}{\sqrt{2}} e_2 \odot e_3, \\
\vp_4 &=& \frac{1}{2\sqrt{2}} \left( e_1 \odot e_1- e_2 \odot e_2 \right) , \\
\vp_5 &=& \frac{1}{2\sqrt{6}} \left( e_1 \odot e_1+e_2 \odot e_2 -2e_3 \odot e_3 \right) . \\
\end{eqnarray*}

\begin{lemma}\label{lemma 3.1}
For the $\vp_i$'s defined as above, we have the following identities.
\begin{eqnarray*}
\mathring{R}(\vp_1,\vp_1) &=& R_{1212}, \\
\mathring{R}(\vp_2,\vp_2) &=& R_{1313}, \\
\mathring{R}(\vp_3,\vp_3) &=& R_{2323}, \\
\mathring{R}(\vp_4,\vp_4) &=& R_{1212}, \\
\mathring{R}(\vp_5,\vp_5) &=& \frac 2 3 (R_{1313}+R_{2323}) -\frac 1 3 R_{1212}. 
\end{eqnarray*}
\end{lemma}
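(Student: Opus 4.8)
The plan is to compute $\mathring{R}(\vp_i,\vp_i)$ directly from the definition \eqref{eq R ring}, namely $\mathring{R}(\vp,\vp)=\sum_{i,j,k,l}R_{iklj}\vp_{il}\vp_{jk}$ (equivalently $\sum R_{ijkl}\vp_{il}\vp_{jk}$ after using the symmetries of $R$), keeping in mind the normalization $u\odot v=u\otimes v+v\otimes u$, so that for $i\neq j$ the tensor $e_i\odot e_j$ has components $(e_i\odot e_j)_{ij}=(e_i\odot e_j)_{ji}=1$, while $e_i\odot e_i$ has $(e_i\odot e_i)_{ii}=2$. The prefactors $\frac{1}{\sqrt2}$, $\frac{1}{\sqrt6}$ are exactly the ones making the $\vp_i$ unit vectors in the inner product fixed in Section 2.

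First I would handle $\vp_1=\frac{1}{\sqrt2}e_1\odot e_2$. Its only nonzero components are $(\vp_1)_{12}=(\vp_1)_{21}=\frac{1}{\sqrt2}$, so $\mathring{R}(\vp_1,\vp_1)=\sum R_{ijkl}(\vp_1)_{il}(\vp_1)_{jk}$ collapses to the four terms with $\{i,l\}=\{1,2\}$ and $\{j,k\}=\{1,2\}$; using $R_{1221}=R_{2112}=-R_{1212}=-R_{2121}$ one gets $\frac12(R_{1212}+R_{2121}-R_{1221}-R_{2112})\cdot\ldots$ — more carefully, $\frac12\big(R_{2112}+R_{1221}+R_{2121}+R_{1212}\big)$ wait, I should track signs via the index order in $R_{iklj}$; in either convention the four surviving terms combine to $R_{1212}$. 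The computations for $\vp_2$ and $\vp_3$ are identical after permuting indices, giving $R_{1313}$ and $R_{2323}$ respectively. For $\vp_4=\frac{1}{\sqrt2}(e_1\odot e_1-e_2\odot e_2)$, the nonzero components are $(\vp_4)_{11}=\sqrt2$, $(\vp_4)_{22}=-\sqrt2$; plugging in, the $i=l=1,\,j=k=1$ and $i=l=2,\,j=k=2$ terms vanish because $R_{1111}=R_{2222}=0$, and the cross terms $i=l=1,\,j=k=2$ and $i=l=2,\,j=k=1$ each contribute $\frac12\cdot 2\cdot R_{\cdot}$ — here $R_{1221}$ or $R_{2112}$ up to the index convention, which equals $-R_{1212}$, and with the two minus signs from $(\vp_4)_{11}(\vp_4)_{22}$ this yields $R_{1212}$ after combining. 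The only slightly more involved case is $\vp_5=\frac{1}{\sqrt6}(e_1\odot e_1+e_2\odot e_2-2e_3\odot e_3)$, where one expands the diagonal $\sum R_{iijj}$-type terms and uses $R_{iijj}=0$ together with $R_{ijji}=-R_{ijij}$ to collect $\frac{1}{6}\cdot\big({}-2R_{1221}-2R_{2112}+ (\text{coefficient }{-2})\text{-type cross terms with }e_3\big)$, and the bookkeeping of the coefficients $1,1,-2$ produces $\frac23(R_{1313}+R_{2323})-\frac13 R_{1212}$.

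I do not expect a genuine obstacle here: the lemma is a finite direct computation. The only place to be careful is sign management — the definition uses $R_{iklj}$ rather than $R_{ijkl}$, so one must consistently use $R_{iklj}=R_{ijkl}$ after applying the pair-swap and antisymmetry symmetries, and the factor-of-$2$ in $e_i\odot e_i$ must be tracked against the $\frac{1}{\sqrt2}$ and $\frac{1}{\sqrt6}$ normalizations. I would present the computation for $\vp_1$ in full as a model, note that $\vp_2,\vp_3,\vp_4$ are analogous, and write out $\vp_5$ with one line of intermediate expansion so the coefficients $\frac23$ and $-\frac13$ are transparent.
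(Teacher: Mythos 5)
Your overall approach is the same as the paper's -- a finite direct computation of $\mathring{R}(\vp_i,\vp_i)=\sum_{i,j,k,l}R_{ijkl}(\vp)_{il}(\vp)_{jk}$ -- and your reasoning for $\vp_1,\vp_2,\vp_3$ is fine. But there is a genuine normalization error in how you treat the diagonal symmetric products, and it propagates to a factor-of-$4$ mistake in the $\vp_4$ and $\vp_5$ entries.

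You explicitly declare $(e_i\odot e_i)_{ii}=2$, as a literal reading of the Section~2 formula $u\odot v=u\otimes v+v\otimes u$ would suggest, and then assert that the prefactors $\tfrac{1}{\sqrt2}$ and $\tfrac{1}{\sqrt6}$ make the $\vp_i$ unit vectors. Those two claims are incompatible: with $(e_i\odot e_i)_{ii}=2$ one gets $(\vp_4)_{11}=\sqrt2$, $(\vp_4)_{22}=-\sqrt2$, hence $|\vp_4|^2=4$, not $1$, and likewise $|\vp_5|^2=4$. The paper's computations (and Example~2.2, where $\mathring R=\operatorname{id}$ on $S^2_0$ for the round sphere) require instead $(e_i\odot e_i)_{ii}=1$; indeed the paper's proof of this lemma records $(\vp_5)_{11}=(\vp_5)_{22}=\tfrac{1}{\sqrt6},\ (\vp_5)_{33}=-\tfrac{2}{\sqrt6}$, which only holds under that normalization. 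If you carry your stated convention through honestly for $\vp_4$, the two surviving terms are $R_{1221}(\vp_4)_{11}(\vp_4)_{22}+R_{2112}(\vp_4)_{22}(\vp_4)_{11}=2\,(-R_{1212})\cdot(-2)=4R_{1212}$, not $R_{1212}$; your intermediate ``$\tfrac12\cdot2\cdot R$'' does not correctly reflect $(\vp_4)_{11}(\vp_4)_{22}=-2$. The same factor of $4$ appears in $\vp_5$.

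The fix is small: adopt $(e_i\odot e_i)_{ii}=1$ (equivalently, use the normalization under which $\{e_i\odot e_i\}_i\cup\{\tfrac{1}{\sqrt2}e_i\odot e_j\}_{i<j}$ is genuinely orthonormal for the Frobenius inner product), so that $(\vp_4)_{11}=\tfrac{1}{\sqrt2}$, $(\vp_4)_{22}=-\tfrac{1}{\sqrt2}$, and $(\vp_5)_{ii}=(\tfrac{1}{\sqrt6},\tfrac{1}{\sqrt6},-\tfrac{2}{\sqrt6})$. Then $\mathring{R}(\vp_4,\vp_4)=2\,(-R_{1212})\cdot(-\tfrac12)=R_{1212}$ and $\mathring{R}(\vp_5,\vp_5)=-2\bigl(\tfrac16R_{1212}-\tfrac26R_{1313}-\tfrac26R_{2323}\bigr)=\tfrac23(R_{1313}+R_{2323})-\tfrac13R_{1212}$, matching the lemma. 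Once this convention is fixed, your argument coincides with the paper's.
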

\begin{proof}
It follows from direct calculation.
For instance, the only nontrivial components of $\vp_5$ are $(\vp_5)_{11}=(\vp_5)_{22}=\frac{1}{\sqrt{6}}$ and $(\vp_5)_{33}=-\frac{2}{\sqrt{6}}$, and we calculate that
\begin{eqnarray*}
\mathring{R}(\vp_5,\vp_5) &=& \sum_{i,j,k,l=1}^3 R_{ijkl}(\vp_5)_{il}(\vp_5)_{jk}\\
&=& R_{1221}(\vp_5)_{11}(\vp_5)_{22} +R_{1331}(\vp_5)_{11}(\vp_5)_{33} \\
&& +R_{2112}(\vp_5)_{22}(\vp_5)_{11}
+R_{2332}(\vp_5)_{22}(\vp_5)_{33} \\
&& +R_{3113}(\vp_5)_{33}(\vp_5)_{11}
+R_{3223}(\vp_5)_{33}(\vp_5)_{22} \\
&=& \frac 2 3 (R_{1313}+R_{2323}) -\frac 1 3 R_{1212}.
\end{eqnarray*}
The other ones are similar. 
\end{proof}
%It was observed in \cite{CGT21} that nonnegative curvature operator of the second kind implies $\Ric \geq 
%\frac{S}{6} \geq 0$. 

One immediately reads from Lemma \ref{lemma 3.1} that two-positive (respectively, two-nonnegative) curvature operator of the second kind implies positive (respectively, nonnegative) sectional curvature, as 
\begin{equation*}
    2R_{1212} = \mathring{R}(\vp_1,\vp_1)+\mathring{R}(\vp_4,\vp_4).
\end{equation*} 
This observation in fact remains valid in all dimensions (see Proposition \ref{prop algebraic implications}).

Another consequence of Lemma \ref{lemma 3.1}, first pointed out in \cite{CGT21}, is that positive (respectively, nonnegative) curvature operator of the second kind implies that $\Ric > \frac{S}{6} >0$ (respectively, $\Ric \geq  \frac{S}{6} \geq 0$). This follows from 
\begin{eqnarray*}
\mathring{R}(\vp_5,\vp_5) &=& \frac 2 3 (R_{1313}+R_{2323}) -\frac 1 3 R_{1212} \\
&=& \frac{2}{3} R_{33} -\frac{1}{3}\left(\frac{S}{2}-R_{33}\right) \\
&=& R_{33}-\frac{S}{6}.
\end{eqnarray*}

Similarly, the following proposition can be easily derived from Lemma \ref{lemma 3.1}. 
\begin{proposition}\label{prop 3d}
Let $R \in S^2_B (\Lambda^2 V)$ be an algebraic curvature operator on a three-dimensional Euclidean vector space $V$. Denote by $S$ the scalar curvature of $R$. 
\begin{enumerate}
    \item If $R$ has three-positive curvature operator of the second kind, then $$\Ric > \frac{S}{12} >0.$$
    \item If $R$ has three-nonnegative curvature operator of the second kind, then 
    $$\Ric \geq \frac{S}{12} \geq 0.$$ Moreover, if the Ricci curvature has an eigenvalue zero, then $R\equiv 0$.  
\end{enumerate}
\end{proposition}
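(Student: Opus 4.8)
\emph{Proof proposal.} The plan is to reduce everything to Lemma~\ref{lemma 3.1} via the variational (Ky Fan) description of $k$-positivity: for the self-adjoint operator on $S^2_0(V)$ representing the bilinear form $\mathring{R}|_{S^2_0(V)}$, the sum of its $k$ smallest eigenvalues equals the minimum of $\sum_{i=1}^k\mathring{R}(\psi_i,\psi_i)$ over all orthonormal $k$-tuples $\psi_1,\dots,\psi_k$ in $S^2_0(V)$. Hence three-positivity (resp.\ three-nonnegativity) of the curvature operator of the second kind is \emph{equivalent} to the inequality $\mathring{R}(\psi_1,\psi_1)+\mathring{R}(\psi_2,\psi_2)+\mathring{R}(\psi_3,\psi_3)>0$ (resp.\ $\ge 0$) holding for every orthonormal triple in $S^2_0(V)$.

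The key step is to choose the triple cleverly. Given an arbitrary unit vector $u\in V$, complete it to an orthonormal basis $\{e_1,e_2,e_3=u\}$ of $V$ and form, as above, $\vp_2=\tfrac{1}{\sqrt2}e_1\odot e_3$, $\vp_3=\tfrac{1}{\sqrt2}e_2\odot e_3$, $\vp_5=\tfrac{1}{\sqrt6}\big(e_1\odot e_1+e_2\odot e_2-2e_3\odot e_3\big)$; these are traceless and orthonormal, hence an admissible triple in $S^2_0(V)$. By Lemma~\ref{lemma 3.1},
\begin{align*}
\mathring{R}(\vp_2,\vp_2)+\mathring{R}(\vp_3,\vp_3)+\mathring{R}(\vp_5,\vp_5)
&=\tfrac53\big(R_{1313}+R_{2323}\big)-\tfrac13 R_{1212}\\
&=\tfrac53\,\Ric(u,u)-\tfrac13 R_{1212},
\end{align*}
since $R_{1313}+R_{2323}=\Ric(e_3,e_3)=\Ric(u,u)$. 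Under the three-positivity hypothesis the left-hand side is positive, so $5\,\Ric(u,u)>R_{1212}$; adding $\Ric(u,u)$ to both sides and using the basis-independent identity $R_{1212}+R_{1313}+R_{2323}=\tfrac12 S$ gives $6\,\Ric(u,u)>\tfrac12 S$, i.e.\ $\Ric(u,u)>\tfrac{S}{12}$. As $u$ is arbitrary, $\Ric>\tfrac{S}{12}$ (as bilinear forms), and taking the trace yields $S>3\cdot\tfrac{S}{12}=\tfrac{S}{4}$, hence $S>0$ and $\tfrac{S}{12}>0$. This proves (1), and the same computation with $\ge$ in place of $>$ proves $\Ric\ge\tfrac{S}{12}\ge 0$ in case (2).

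For the rigidity assertion in (2), suppose $\Ric$ has an eigenvalue equal to $0$. Since $\Ric\ge\tfrac{S}{12}\ge 0$, this eigenvalue is the smallest, so $0\ge\tfrac{S}{12}\ge 0$, forcing $S=0$. Then $\Ric$ is positive semidefinite with vanishing trace, hence $\Ric\equiv 0$; and in dimension three the Weyl tensor vanishes identically, so the full curvature tensor is algebraically determined by its Ricci tensor, giving $R\equiv 0$.

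I do not expect a real obstacle here once Lemma~\ref{lemma 3.1} is in hand: the only genuine content is spotting the triple $\{\vp_2,\vp_3,\vp_5\}$ adapted to the direction $u$, which is (up to the choice of splitting of $u^\perp$) the unique combination whose $\mathring{R}$-trace is a positive multiple of $\Ric(u,u)$ minus a single sectional curvature, so that a one-line manipulation produces $\Ric(u,u)>S/12$. The only point demanding care is the bookkeeping needed to invoke the min--max characterization: one must check that $\vp_2,\vp_3,\vp_5$ really lie in $S^2_0(V)$ and are orthonormal there, and that the relevant constant $\tfrac12 S=\sum_{i<j}R_{ijij}$ does not depend on the chosen orthonormal basis.
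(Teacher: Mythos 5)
Your proposal is correct and follows essentially the same argument as the paper: both apply the three-nonnegativity/positivity hypothesis to the orthonormal triple $\{\vp_2,\vp_3,\vp_5\}$ from Lemma~\ref{lemma 3.1} to extract $\Ric(u,u)$ versus a single sectional curvature, then invoke the trace identity $R_{1212}+R_{1313}+R_{2323}=\tfrac12 S$, and finish the rigidity claim in (2) by forcing $S=0$ and using that a three-dimensional curvature tensor is determined by its Ricci tensor. The only difference is that you spell out the Ky Fan characterization explicitly, which the paper leaves implicit.
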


\begin{proof}
(1). If $R$ has three-positive curvature operator of the second kind, then 
\begin{eqnarray*}
0& < & \mathring{R}(\vp_2,\vp_2)+\mathring{R}(\vp_3,\vp_3)+\mathring{R}(\vp_5,\vp_5) \\
&=& \frac 5 3 (R_{1313}+R_{2323}) -\frac 1 3 R_{1212} \\
&=& \frac 5 3 R_{33} -\frac 1 3 \left( \frac{S}{2}-R_{33}\right)\\
&=& 2 \left( R_{33} -\frac{S}{12} \right).
\end{eqnarray*}
Thus we have $\Ric > \frac{S}{12}$. Tracing it yields $S > \frac{S}{4}$, which implies $S>0$.

(2). The first part follows similarly as in part (1).
If Ricci curvature has an eigenvalue zero, then $S=0$. In view of $\Ric \geq 0$, we must have $\Ric \equiv 0$ and thus $R \equiv 0$.
\end{proof}

We are ready to prove Theorem \ref{thm 3d}.

\begin{proof}[Proof of Theorem \ref{thm 3d}]
(1). By part (1) of Proposition \ref{prop 3d}, $M$ has positive Ricci curvature. The conclusion follows from Hamilton's classification of closed three-manifolds with positive Ricci curvature in \cite{Hamilton82}.

(2). By part (2) of Proposition \ref{prop 3d}, $M$ has nonnegative Ricci curvature. 
If $M$ is not flat, then it must be locally irreducible. Otherwise, the universal cover of $M$ splits as $N^2\times \R$, whose Ricci curvature has a zero eigenvalue everywhere. Since $N^2 \times \R$ also has three-nonnegative curvature operator of the second kind, it must be flat by part (2) of Proposition \ref{prop 3d}. 
The desired conclusion then follows from Hamilton's classification of closed locally irreducible three-manifolds with nonnegative Ricci curvature \cite{hamilton86}.  
\end{proof}

\section{Algebraic Implications}

In this section, we investigate the curvature operator of the second kind in dimensions four and above, and establish various algebraic relations with other frequently used curvature conditions such as sectional curvature and isotropic curvature. 
For the reader's convenience, we first recall some definitions regarding isotropic curvature and its variants.

\begin{definition}\label{def curvature conditions}
Let $R \in S^2_B (\Lambda^2 V)$ be an algebraic curvature operator on a Euclidean vector space $V$ of dimension $n\geq 4$. 
\begin{enumerate}
    \item We say $R$ has nonnegative isotropic curvature if for all orthonormal four-frame $\{e_1, e_2, e_3, e_4\}\subset V$, it holds that 
    \begin{equation*}
        R_{1313}+R_{1414}+R_{2323}+R_{2424}-2R_{1234} \geq 0.
    \end{equation*}
    If the inequality is strict, $R$ is said to have positive isotropic curvature. 
    \item We say $R$ has weakly PIC1 if for all orthonormal four-frame $\{e_1, e_2, e_3, e_4\}\subset V$ and all $\l \in [-1,1]$, it holds that 
    \begin{equation*}
        R_{1313}+\l^2 R_{1414}+R_{2323}+\l^2 R_{2424}-2\l R_{1234} \geq 0.
    \end{equation*}
    If the inequality is strict, $R$ is said to have strictly PIC1. 
    \item We say $R$ has nonnegative complex sectional curvature (or weakly PIC2) if for all orthonormal four-frame $\{e_1, e_2, e_3, e_4\}\subset V$ and all $\l, \mu \in [-1,1]$, it holds that 
    \begin{equation*}
        R_{1313}+\l^2 R_{1414}+\mu^2 R_{2323}+\l^2 \mu^2  R_{2424}-2\l \mu R_{1234} \geq 0.
    \end{equation*}
    If the inequality is strict, $R$ is said to have positive complex sectional curvature (or strictly PIC2). 
\end{enumerate}
\end{definition}

The notion of positive isotropic curvature was introduced by Micallef and Moore \cite{MM88} in their study of minimal two-spheres in Riemannian manifolds. They proved that a simply connected closed Riemannian manifold with positive isotropic curvature is homeomorphic to the sphere. 
The PIC2 curvature condition was introduced by Brendle and Schoen and it played a central role in their proof of the quarter-pinched differentiable sphere theorem \cite{BS09}. 
Ni and Wolfson \cite{NWolfson07} discovered that strictly (respectively, weakly) PIC2 is equivalent to positive (respectively, nonnegative) complex sectional curvature. They also gave a simple proof that positive and nonnegative complex sectional curvature are preserved by the Ricci flow. 
The PIC1 curvature condition was introduced by Brendle \cite{Brendle08}.
All the above curvature conditions in Definition \ref{def curvature conditions} are preserved by the Ricci flow (see for example \cite{Wilking13} for a unified simple proof). Moreover, the normalized Ricci flow evolves an initial metric with strictly PIC1 (or the stronger condition strictly PIC2) into a limit metric with constant positive sectional curvature (see \cite{BS09, Brendle08, NWolfson07}). 

Next, we summarize the algebraic relations between the curvature operator of the second kind and other curvature conditions.
\begin{proposition}\label{prop algebraic implications}
Let $R \in S^2_B (\Lambda^2 V)$ be an algebraic curvature operator on a Euclidean vector space $V$ of dimension $n\geq 4$. 
\begin{itemize}
    \item [(1)] $R$ has $\frac{(n-1)(n+2)}{2}$-positive (respectively, $\frac{(n-1)(n+2)}{2}$-nonnegative) curvature operator of the second kind if and only if $R$ has positive (respectively, nonnegative) scalar curvature. 
    \item [(2)] If $R$ has $n$-positive (respectively, $n$-nonnegative) curvature operator of the second kind, then 
    $\Ric > \frac{S}{n(n+1)} > 0$ (respectively, $\Ric \geq \frac{S}{n(n+1)} \geq 0$ ).
    \item [(3)] If $R$ has two-positive (respectively, two-nonnegative) curvature operator of the second kind, then $R$ has positive (respectively, nonnegative) sectional curvature.
    \item [(4)] If $R$ has three-positive (respectively, three-nonnegative) curvature operator of the second kind, then $R$ has strictly (respectively, weakly) PIC1.
    \item [(5)] If $R$ has positive (respectively, nonnegative) curvature operator of the second kind, then $R$ has positive (respectively, nonnegative) complex sectional curvature.
\end{itemize}
\end{proposition}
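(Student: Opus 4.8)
The plan is to prove each of the five items by testing the bilinear form $\mathring R$ on carefully chosen traceless symmetric two-tensors, so that the defining inequalities of $k$-positivity/$k$-nonnegativity translate directly into the desired curvature inequalities.

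For item (1), I would take an orthonormal basis $\{e_i\}$ of $V$ and observe that $\mathring R$ restricted to the full space $S^2(V)$ has trace equal to a fixed multiple of the scalar curvature $S$ (indeed, $\mathrm{tr}\,\mathring R = \sum_{i,j} R_{ijji}$ up to normalization). Since the ambient factor $\R g$ contributes $\mathring R(g,g)$, which is itself a multiple of $S$, the trace of $\mathring R$ restricted to $S^2_0(V)$ is again a fixed positive multiple of $S$. The sum of all $\frac{(n-1)(n+2)}{2}$ eigenvalues on $S^2_0(V)$ is exactly this trace, so the sum being positive (nonnegative) is equivalent to $S>0$ ($S\geq 0$); this gives both directions. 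For item (3), I would follow the three-dimensional computation in Lemma \ref{lemma 3.1}: fix a $2$-plane $\spann\{e_1,e_2\}$, complete to an orthonormal basis, and note that $\tfrac{1}{\sqrt2}e_1\odot e_2$ and $\tfrac{1}{\sqrt2}(e_1\odot e_1 - e_2\odot e_2)$ are orthonormal traceless tensors with $\mathring R$ evaluating to $R_{1212}$ on each; hence $2R_{1212}$ is a sum of two values of the quadratic form on an orthonormal pair, which is at least the sum of the two smallest eigenvalues, so two-positivity (two-nonnegativity) forces $R_{1212}>0$ ($\geq 0$).

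Item (2) is the trickiest of the ``diagonal'' cases: I would fix a unit vector $e_n$ and look at the restriction of $\mathring R$ to the $n$-dimensional subspace spanned by the traceless tensors $e_i\odot e_i - \tfrac1n\sum_j e_j\odot e_j$, or more efficiently compute $\mathring R$ on the single traceless tensor $\psi = (n-1)\,e_n\odot e_n - \sum_{i<n} e_i\odot e_i$ together with the $n-1$ tensors of the form $e_i\odot e_n$ ($i<n$); the computation in Example \ref{eg cylinder} suggests this is the right family, since there the $\l_1$-eigenvector has exactly this shape. Summing the $n$ values of $\mathring R$ on an orthonormal version of this family should produce a multiple of $R_{nn} - \tfrac{S}{n(n+1)}$, so that $n$-positivity gives $\Ric > \tfrac{S}{n(n+1)}$, and tracing over the choice of $e_n$ gives $S>0$ (or $\geq 0$). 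For item (4), which implies the main PIC1 input for Theorems \ref{thm 4+} and \ref{thm Nishikawa}, I would test $\mathring R$ on the three mutually orthogonal traceless tensors
\[
  a_1(e_1\odot e_3 + e_2\odot e_4) + a_2(e_1\odot e_4 - e_2\odot e_3), \quad \text{and two companions},
\]
chosen so that the sum of their three values of $\mathring R$, after optimizing the coefficients over $\l\in[-1,1]$, equals $R_{1313}+\l^2R_{1414}+R_{2323}+\l^2R_{2424}-2\l R_{1234}$; three-positivity then yields strictly PIC1. Finally, for item (5), given an orthonormal four-frame and $\l,\mu\in[-1,1]$, I would exhibit a \emph{single} traceless symmetric two-tensor $\vp$ (built from $e_1\odot e_3$, $e_2\odot e_4$ and cross terms with coefficients depending on $\l,\mu$) such that $\mathring R(\vp,\vp)$ is a nonnegative multiple of the complex sectional curvature expression $R_{1313}+\l^2R_{1414}+\mu^2R_{2323}+\l^2\mu^2R_{2424}-2\l\mu R_{1234}$; positivity (nonnegativity) of $\mathring R$ on $S^2_0(V)$ then gives strictly (weakly) PIC2.

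The main obstacle is identifying the correct test tensors in items (4) and (5): unlike the diagonal cases, the PIC1 and PIC2 expressions involve the mixed component $R_{1234}$, which only shows up when one feeds $\mathring R$ a symmetric tensor mixing the $\{e_1,e_2\}$ and $\{e_3,e_4\}$ blocks in a skew fashion, and one must check that the cross terms assemble exactly into $-2\l R_{1234}$ (resp.\ $-2\l\mu R_{1234}$) with the right sign while the remaining terms reproduce the diagonal part and no spurious terms survive. I expect the verification that a suitable orthonormal triple exists realizing \emph{every} $\l\in[-1,1]$ in item (4) — rather than just $\l=\pm1$ — to require the extra degree of freedom afforded by using three tensors instead of one, and getting the bookkeeping of the traceless condition and the normalization right is where the real work lies; the rest is the routine index manipulation already illustrated in Lemma \ref{lemma 3.1}.
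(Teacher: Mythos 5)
Your outline for items (1)--(3) is correct and coincides with the paper's argument: for (1) you compute the trace of $\mathring R$ on $S^2_0(V)$ by subtracting the contribution of $\R g$ from the total trace on $S^2(V)$; for (2) you sum $\mathring R$ over the $n$-element orthonormal family consisting of the ``diagonal'' traceless tensor $(n-1)e_1\odot e_1 - \sum_{j\geq 2}e_j\odot e_j$ and the tensors $e_1\odot e_j$; for (3) you use the two-element family $\frac{1}{\sqrt 2}e_1\odot e_2$ and $\frac{1}{\sqrt 2}(e_1\odot e_1 - e_2\odot e_2)$. These match the paper almost verbatim.

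For item (5) there is a genuine gap. Testing $\mathring R$ on a \emph{single} traceless tensor built from $e_1\odot e_3$, $e_2\odot e_4$, $e_1\odot e_4$, $e_2\odot e_3$ will not isolate the complex sectional curvature expression: the quadratic form $\mathring R(\vp,\vp)$ on such a tensor produces cross terms like $R_{1413}$, $R_{1323}$, $R_{2412}$, $R_{2423}$ coming from pairing the $(1,3)$-block entries against the $(1,4)$-block entries, and there is no free parameter left to cancel them all. The paper avoids this by splitting the four-parameter tensor into \emph{two} rank-two tensors, $\vp = e_1\odot e_3 + \l\mu\, e_2\odot e_4$ and $\psi = \mu\, e_2\odot e_3 - \l\, e_1\odot e_4$, each of which has exactly two independent nonzero entries; adding $\mathring R(\vp,\vp) + \mathring R(\psi,\psi)$ makes the unwanted terms collapse via the first Bianchi identity into a single $-6\l\mu R_{1234}$. (Note also that $6 > 2$, so the paper's inequality is strictly stronger than weakly PIC2 and trivially implies it.)

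For item (4) you correctly flag the key difficulty (spurious curvature components that are not of PIC1 type), but the proposal does not resolve it, and the test tensors you suggest are not the right ones. The paper's three tensors are $\vp_1 = e_1\odot e_1 + \l e_2\odot e_2 - e_3\odot e_3 - \l e_4\odot e_4$ (a diagonal tensor, not present in your list), $\vp_2 = \l e_1\odot e_4 - e_2\odot e_3$, and $\vp_3 = e_1\odot e_3 + \l e_2\odot e_4$. Their sum $\sum_i \mathring R(\vp_i,\vp_i)$ does not yet give anything close to the PIC1 expression: it contains spurious terms $-R_{1212} - R_{3434} + R_{1414} + R_{2323}$ weighted by $\l$. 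The paper then applies the same estimate to three relabeled/sign-flipped frames (e.g.\ $\{-e_2, e_1, e_3, e_4\}$ with $\l\to -\l$) and sums; after two rounds of symmetrization the spurious terms cancel, leaving $12(R_{1313}+R_{2323}+\l^2 R_{1414}+\l^2 R_{2424}) - 48\l R_{1234} \geq 0$. This is a \emph{stronger} inequality with coefficient $4\l$ rather than $2\l$, valid for all $\l\in\R$, which then implies weakly PIC1 because nonnegativity of the quadratic in $\l$ forces $R_{1414}+R_{2424}\geq 0$, and then $-2\l R_{1234} \geq -4\l R_{1234}$ whenever $\l R_{1234}\geq 0$ and the frame swap $e_1\leftrightarrow e_2$ handles the opposite sign. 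Your proposal, by contrast, asks the sum of three $\mathring R$-values to equal the PIC1 expression exactly, which does not happen for any choice of test tensors; the extra averaging over frames is an essential ingredient you would need to add.
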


\begin{proof}[Proof of Proposition \ref{prop algebraic implications}]
(1). Since the dimension of $S^2_0(V)$ is equal to $\frac{(n-1)(n+2)}{2}$,  $\mathring{R}$ being $\frac{(n-1)(n+2)}{2}$-positive (respectively, nonnegative) is equivalent to the trace of $\mathring{R}$ (restricted on $S^2_0(V)$) being positive (respectively, nonnegative). 
It can be easily seen that the total trace of $\mathring{R}$ on $S^2(V)$ is equal to $\frac{S}{2}$ and that $\mathring{R}$ restricted on the one-dimensional subspace $\R g$ is equal to multiplication by $-\frac{S}{n}$.
Thus the trace of $\mathring{R}$ restricted on $S^2_0(V)$ is equal to $\frac{n+2}{2n}S$.

(2). Let $\{e_1, \cdots, e_n\}$ be an orthonormal basis of $V$ and define
\begin{equation*}
    \vp_1 = \frac{1}{2\sqrt{n(n-1)}}\left((n-1)e_1 \odot e_1 -\sum_{j=2}^n e_j \odot e_j \right), 
\end{equation*}
and 
$    \vp_i = \frac{1}{\sqrt{2}} e_1 \odot e_{i} \text{ for } 2\leq i \leq n. $
Then $\{\vp_1, \cdots, \vp_n\}$ is an orthonormal subset of $S^2_0(V)$. 
Using the observation that the nonzero components of $\vp_1$ are 
$    (\vp_1)_{11} =\sqrt{\frac{n-1}{n}},$
and 
$  (\vp_1)_{jj} = -\frac{1}{\sqrt{n(n-1)}} \text{ for } 2\leq j\leq n$,  
we calculate that 
\begin{eqnarray*}
    \mathring{R}(\vp_1,\vp_1)
    &=& \sum_{i,j=1}^n R_{ijji}(\vp_1)_{ii}(\vp_1)_{jj} \\
    &=& \frac{2}{n}\sum_{j=2}^n R_{1j1j} -\frac{1}{n(n-1)}\sum_{i,j=2}^{n} R_{ijij} \\
    &=& \frac{2}{n}R_{11}-\frac{1}{n(n-1)}\left(S-2R_{11} \right) \\
    &=& \frac{2}{n-1}R_{11} -\frac{1}{n(n-1)}S.
\end{eqnarray*}
For $2\leq i \leq n$, we have
\begin{equation*}
    \mathring{R}(\vp_i,\vp_i) =R_{1i1i}.
\end{equation*}
Since $R$ has $n$-nonnegative curvature operator of the second kind, we get 
\begin{eqnarray*}
    0 & \leq& \mathring{R}(\vp_1,\vp_1)+\sum_{i=2}^n \mathring{R}(\vp_i,\vp_i) \\
    &=& \frac{2}{n-1}R_{11} -\frac{1}{n(n-1)}S +\sum_{i=2}^n R_{1i1i} \\
    &=& \frac{n+1}{n-1}R_{11} -\frac{1}{n(n-1)}S,
\end{eqnarray*}
which implies that $\Ric \geq \frac{S}{n(n+1)} \geq 0$.
Similarly, $n$-positive curvature operator of the second kind implies $\Ric > \frac{S}{n(n+1)} > 0$.

(3). Notice that
\begin{eqnarray*}
\vp_1 &=& \frac{1}{\sqrt{2}}e_1 \odot e_2, \\
\vp_2 &=& \frac{1}{2\sqrt{2}} \left( e_1 \odot e_1 -e_2 \odot e_2 \right)
\end{eqnarray*}
are orthonormal traceless symmetric two-tensors in $S^2_0(V)$, where $\{e_1,e_2\}$ is an orthonormal two-frame in $V$. 
In view of 
\begin{equation*}
    2R_{1212}=\mathring{R}(\vp_1,\vp_1)+\mathring{R}(\vp_2,\vp_2),
\end{equation*}
we see that two-positive (respectively, two-nonnegative) curvature operator of the second kind implies positive (respectively, nonnegative) sectional curvature. 

(4). This is a consequence of Lemma \ref{lemma PIC1} below.
For any orthonormal four-frame $\{e_1, \cdots, e_4\} \subset V$ and any $\l \in [-1,1]$, we have by Lemma \ref{lemma PIC1} that
\begin{eqnarray*}
    && R_{1313}+\l^2R_{1414}+R_{2323}+\l^2R_{2424} -4\l R_{1234} \geq 0, \\
    && R_{1313}+\l^2R_{1414}+R_{2323}+\l^2R_{2424} + 4\l R_{1234} \geq 0.
\end{eqnarray*}
Therefore, we have 
\begin{equation*}
    R_{1313}+\l^2R_{1414}+R_{2323}+\l^2R_{2424} \geq  4 | \l R_{1234}| \geq 2|\l R_{1234}|,
\end{equation*}
from which we conclude that $$R_{1313}+\l^2R_{1414}+R_{2323}+\l^2 R_{2424} -2\l R_{1234} \geq 0.$$ 
Hence $R$ has weakly PIC1.

(5). This follows from Lemma \ref{lemma PIC2} below and a similar argument as in (4).

\end{proof}

\begin{lemma}\label{lemma PIC1}
Suppose $n=\dim(V)\geq 4$ and $R \in S^2_B(\Lambda^2(V))$ has three-positive (respectively, three-nonnegative) curvature operator of the second kind.  
Then for any orthonormal four-frame
 $\{e_1, \cdots, e_4\} \subset V$ and any $\l \in \R$, it holds that 
\begin{equation*}%\label{eq 1}
    R_{1313}+\l^2 R_{1414}+R_{2323}+\l^2 R_{2424} - 4 \l  R_{1234} > \text{ (respectively, $\geq$) } 0.
\end{equation*}
\end{lemma}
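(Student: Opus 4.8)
The plan is to test $\mathring R$ against a well-chosen orthonormal triple in $S^2_0(V)$. Recall that $R$ has three-positive (resp.\ three-nonnegative) curvature operator of the second kind exactly when $\lambda_1(\mathring R)+\lambda_2(\mathring R)+\lambda_3(\mathring R)>0$ (resp.\ $\ge 0$), where $\lambda_1\le\lambda_2\le\cdots$ denote the eigenvalues of $\mathring R$ on $S^2_0(V)$; by the variational characterization of sums of the smallest eigenvalues, this is equivalent to $\mathring R(\psi_1,\psi_1)+\mathring R(\psi_2,\psi_2)+\mathring R(\psi_3,\psi_3)>0$ (resp.\ $\ge 0$) for \emph{every} orthonormal triple $\{\psi_1,\psi_2,\psi_3\}\subset S^2_0(V)$. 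Thus it suffices to produce, for each orthonormal four-frame $\{e_1,e_2,e_3,e_4\}$ and each $\lambda\in\R$, an orthonormal triple whose $\mathring R$-values sum to a positive multiple of $R_{1313}+\lambda^2R_{1414}+R_{2323}+\lambda^2R_{2424}-4\lambda R_{1234}$.

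The computational tool is the polarization identity
$$\mathring R(e_i\odot e_j,\ e_k\odot e_l)=2\bigl(R_{iklj}+R_{ilkj}\bigr),$$
which follows at once from \eqref{eq R ring} and the inner-product normalizations of Section~2, together with the first Bianchi identity in the forms $R_{1243}+R_{1423}=R_{1324}-2R_{1234}$ and $R_{1234}+R_{1342}+R_{1423}=0$. In particular $\mathring R(e_i\odot e_j,e_i\odot e_j)=2R_{ijij}$ for $i\ne j$, while the "mixed" products simplify to $\mathring R(e_1\odot e_3,e_2\odot e_4)=2R_{1324}-4R_{1234}$, $\mathring R(e_1\odot e_4,e_2\odot e_3)=2R_{1234}+2R_{1324}$, and $\mathring R(e_1\odot e_2,e_3\odot e_4)=2R_{1234}-4R_{1324}$.

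The natural starting point is a pair of "Kähler-type" tensors, e.g.\ $u_1=e_1\odot e_3+\lambda\,e_2\odot e_4$ and $u_2=\lambda\,e_1\odot e_4+e_2\odot e_3$, which are traceless, mutually orthogonal, of common norm $\sqrt{2(1+\lambda^2)}$, and supported on "orthogonal" pairs of basis directions, so that the Ricci-type components $R_{1314},R_{1323},R_{1424},R_{2324}$ do not occur; a direct computation gives
$$\mathring R(u_1,u_1)+\mathring R(u_2,u_2)=2\bigl(R_{1313}+R_{2323}+\lambda^2R_{1414}+\lambda^2R_{2424}\bigr)-4\lambda R_{1234}+8\lambda R_{1324}.$$
(A two-tensor computation of this kind is how Cao--Gursky--Tran \cite{CGT21} deduce strictly PIC1 from two-positivity.) It then remains to adjoin a third traceless tensor $u_3$, orthogonal to $u_1$ and $u_2$, whose $\mathring R$-form cancels the residual multiple of $R_{1324}$, supplies a further negative multiple of $R_{1234}$, contributes to the diagonal terms $R_{1313},R_{1414},R_{2323},R_{2424}$ only in the ratio $1:\lambda^2:1:\lambda^2$, and introduces no other curvature components; a suitable $u_3$ can be sought within the span of $e_2\odot e_4-\lambda\,e_1\odot e_3$, $e_1\odot e_4-\lambda\,e_2\odot e_3$, $e_1\odot e_2$, $e_3\odot e_4$ and the diagonal tensors $e_a\odot e_a-e_b\odot e_b$, its coefficients being forced by a short linear computation that again uses the Bianchi identity. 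Summing the three contributions and normalizing yields the desired positive multiple of $R_{1313}+\lambda^2R_{1414}+R_{2323}+\lambda^2R_{2424}-4\lambda R_{1234}$, and the hypothesis finishes the proof.

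The delicate point — and where one genuinely exploits three-positivity rather than two-positivity — is the simultaneous fulfilment of all the constraints on $u_3$: cancelling the "Weyl-type" term $R_{1324}$, avoiding the Ricci-type terms and the sectional terms $R_{1212},R_{3434}$ arising from diagonal and from $e_1\odot e_2,e_3\odot e_4$ contributions, landing the diagonal part in the prescribed proportion, and staying orthogonal to $u_1,u_2$. Checking that these conditions are compatible (whereas, for the stronger coefficient $4\lambda$, they are not compatible with only two test tensors) is the crux of the argument; once $u_3$ is in hand, the remainder is a bookkeeping computation, and the same tensors, with "$>$" relaxed to "$\ge$", handle the three-nonnegative case.
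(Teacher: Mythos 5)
Your strategy — find a single orthonormal triple $\{u_1,u_2,u_3\}\subset S^2_0(V)$ whose $\mathring R$-values sum directly to a positive multiple of $R_{1313}+\lambda^2R_{1414}+R_{2323}+\lambda^2R_{2424}-4\lambda R_{1234}$ — does not work, and the place where it fails is precisely the step you leave unverified: the existence of $u_3$. You correctly compute $\mathring R(u_1,u_1)+\mathring R(u_2,u_2)=2(R_{1313}+R_{2323}+\lambda^2R_{1414}+\lambda^2R_{2424})-4\lambda R_{1234}+8\lambda R_{1324}$, and you need $u_3$, orthogonal to $u_1,u_2$, with $\mathring R(u_3,u_3)$ contributing the diagonal terms in the ratio $1:\lambda^2:1:\lambda^2$, cancelling $8\lambda R_{1324}$, supplying additional $-R_{1234}$, and producing no Ricci-type terms and no $R_{1212},R_{3434}$. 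These constraints are \emph{not} simultaneously satisfiable. To avoid Ricci-type terms, $u_3$ must be either purely diagonal or else lie entirely in one of the two ``K\"ahler pairs'' $\spann\{e_1\odot e_3,\,e_2\odot e_4\}$ or $\spann\{e_1\odot e_4,\,e_2\odot e_3\}$ (any mixing between these blocks, or between $e_1\odot e_2$, $e_3\odot e_4$ and a diagonal tensor, produces Ricci-type components that cannot cancel for generic $R$). In the first block orthogonality to $u_1$ forces $u_3\propto \lambda e_1\odot e_3-e_2\odot e_4$, whose $\mathring R$-value is $2\lambda^2 R_{1313}+2R_{2424}+8\lambda R_{1234}-4\lambda R_{1324}$ — the wrong $\lambda$-powers \emph{and} the wrong sign on $R_{1234}$; in the second block orthogonality to $u_2$ forces $u_3\propto e_1\odot e_4-\lambda e_2\odot e_3$, giving $2R_{1414}+2\lambda^2 R_{2323}-4\lambda R_{1234}-4\lambda R_{1324}$ — again the wrong $\lambda$-powers. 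The pure-diagonal option cannot hit all four diagonal components without also producing $R_{1212}$ or $R_{3434}$. So your ``short linear computation'' has no solution, and the remark that checking compatibility ``is the crux'' is exactly right — but it fails.

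The paper does something structurally different. It accepts that the three test tensors (one of which, $\varphi_1=e_1\odot e_1+\lambda e_2\odot e_2-e_3\odot e_3-\lambda e_4\odot e_4$, is diagonal) produce a sum with extraneous terms $-2\lambda(R_{1212}+R_{3434}-R_{1414}-R_{2323})$ and imbalanced diagonal coefficients. It then applies the same inequality to the permuted frame $\{-e_2,e_1,e_3,e_4\}$ with $\lambda\mapsto -\lambda$ and to a further permutation, and averages; the extraneous terms cancel by symmetry, leaving exactly $12(R_{1313}+\lambda^2R_{1414}+R_{2323}+\lambda^2R_{2424})-48\lambda R_{1234}\ge 0$. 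Your argument is missing this averaging step entirely, and without it the single-triple approach is a dead end. Also note a smaller discrepancy: your $u_2=\lambda e_1\odot e_4+e_2\odot e_3$ has the opposite sign from the paper's $\varphi_2=\lambda e_1\odot e_4-e_2\odot e_3$; this is what causes your two-tensor sum to pick up $+8\lambda R_{1324}$ with only $-4\lambda R_{1234}$, making the remaining burden on $u_3$ even more restrictive than the paper ever faces.
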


\begin{proof}
We shall only prove the case that $R$ has three-nonnegative curvature operator of the second kind, as the three-positive case is similar.

Let $\{e_1, \cdots, e_4\}$ be an orthonormal four-frame in $V$ and $\l \in \R$. Define the following traceless symmetric two-tensors on $V$. 
\begin{eqnarray*}
\vp_1 &=& \frac 1 2 \left( e_1\odot e_1+\l e_2\odot e_2 -e_3\odot e_3 -\l e_4 \odot e_4\right) ,\\
\vp_2 &=&\l e_1 \odot e_4 -e_2\odot e_3 , \\
\vp_3 &=& e_1 \odot e_3 +\l e_2\odot e_4 . 
\end{eqnarray*}
One easily verifies that these tensors are mutually orthogonal in $S^2_0(V)$ and of the same magnitude $\sqrt{2(1+\l^2)}$. 

Next, we compute $\mathring{R}(\vp_i,\vp_i)$ for $i=1,2,3$. 
Notice that the only nonzero components of $\vp_1$ are 
\begin{equation*}
    (\vp_{1})_{11}=1, (\vp_{1})_{22}=\l, (\vp_{1})_{33}=-1, (\vp_{1})_{44}=-\l.
\end{equation*}
So we calculate 
\begin{eqnarray*}
    \mathring{R}(\vp_1,\vp_1) &=& \sum_{i,j,k,l=1}^n R_{ijkl}(\vp_1)_{il}(\vp_1)_{jk} \\
    &=& \l R_{1221} - R_{1331} -\l R_{1441} + \l R_{2112} -\l R_{2332} -\l^2 R_{2442} \\
    && -R_{3113} - \l R_{3223} +\l R_{3443} - \l R_{4114} -\l^2 R_{4224} +\l R_{4334}\\
    &=& 2(-\l R_{1212}-\l R_{3434}+R_{1313}+\l^2 R_{2424}+ \l R_{1414}+\l R_{2323}).
\end{eqnarray*}
Similarly,  
\begin{eqnarray*}
\mathring{R}(\vp_2,\vp_2) &=& 2(\l^2 R_{1414}+ R_{2323}-2\l R_{1234}+2\l R_{1342} ), \\
\mathring{R}(\vp_3,\vp_3) &=& 2(R_{1313}+\l^2 R_{2424}-2\l R_{1234}+2\l R_{1423} ) . \\
\end{eqnarray*}
Since $R$ has three-nonnegative curvature operator of the second kind, we obtain
\begin{eqnarray*}
 0&\leq & \mathring{R}(\vp_1,\vp_1)+\mathring{R}(\vp_2,\vp_2)+\mathring{R}(\vp_3,\vp_3) \\
 &=& 4(R_{1313}+\l^2 R_{2424})+2(R_{2323}+\l^2 R_{1414})-12\l R_{1234} \\
 && +2\l(-R_{1212}- R_{3434}+ R_{1414}+R_{2323}),
\end{eqnarray*}
where we have used the first Bianchi identity $R_{1342}+R_{1423}=-R_{1234}$.

We then replace $\{e_1,e_2, e_3,e_4\}$ in the above argument by $\{e_2,e_1, e_3,e_4\}$ and replace $\l$ by $-\l$ to get 
\begin{eqnarray*}
 0&\leq & 4(R_{2323}+\l^2 R_{1414})+2(R_{1313}+\l^2 R_{2424} ) -12\l R_{1234}\\
 &&-2\l(-R_{1212}- R_{3434}+ R_{2424}+R_{1313}).
\end{eqnarray*}
Adding the above two inequalities together produces
\begin{eqnarray*}
 0&\leq &  6(R_{1313}+\l^2 R_{1414}+R_{2323}+\l^2 R_{2424} )-24\l R_{1234} \\
 && +2\l(R_{1414}+R_{2323}-R_{2424}-R_{1313}).
\end{eqnarray*}
Replacing $\{e_1,e_2, e_3,e_4\}$ by $\{-e_2,e_1, e_3,e_4\}$ in the above argument yields
\begin{eqnarray*}
 0&\leq &  6(R_{2323}+\l^2 R_{2424}+R_{1313}+\l^2 R_{1414} )-24\l R_{1234} \\
 && +2\l(R_{2424}+R_{1313}-R_{1414}-R_{2323}).
\end{eqnarray*}
We obtain, by adding the above two inequalities together, that
\begin{eqnarray*}
 0&\leq &  12(R_{2323}+\l^2 R_{2424}+R_{1313}+\l^2 R_{1414} )-48\l R_{1234}. 
\end{eqnarray*}
The proof is complete. 
\end{proof}

\begin{lemma}\label{lemma PIC2}
Suppose $n=\dim(V)\geq 4$ and $R \in S^2_B(\Lambda^2(V))$ has positive (respectively, nonnegative) curvature operator of the second kind. Then for any orthonormal four-frame
 $\{e_1, \cdots, e_4\} \subset V$ and any $\l,\mu \in \R$, it holds that 
\begin{equation*}%\label{eq 1}
    R_{1313}+\l^2 R_{1414}+\mu^2 R_{2323}+\l^2 \mu^2 R_{2424} - 6 \l \mu R_{1234} > (\text{respectively, } \geq ) \  0.
\end{equation*}
\end{lemma}

\begin{proof}
Let $\{e_1, \cdots, e_4\}$ be an orthonormal four-frame in $V$. % and we extend it to an orthonormal basis $\{e_1, \cdots, e_n\}$ for $V$.
Given $\l,\mu \in \R$, we define $\vp$ and $\psi$ by
\begin{equation*}
     \vp=e_1 \odot e_3 +\l \mu e_2 \odot e_4
\end{equation*}
and 
\begin{equation*}
    \psi=\mu e_2 \odot e_3 -\l e_1 \odot e_4
\end{equation*}
respectively. 
It's easy to see that both $\vp$ and $\psi$ are traceless symmetric two-tensors on $V$. 

Noticing that the only non-trivial components of $\vp$ are 
\begin{equation*}
    \vp_{13}=\vp_{31}=1 \text{ and }    \vp_{24}=\vp_{42}=\l \mu, 
\end{equation*}
we compute that 
\begin{eqnarray*}
\mathring{R}(\vp,\vp) &=& \sum_{i,j,k,l=1}^n R_{ijkl}\vp_{il}\vp_{jk} \\
&=& R_{1313} +\l \mu R_{1243}+\l \mu R_{1423} \\ 
&& +R_{3131}+\l \mu R_{3241}+\l \mu R_{3421} \\
&& + \l \mu R_{2134}+ \l \mu R_{2314}+ \l^2 \mu^2 R_{2424} \\
&& +\l \mu R_{4132}+\l \mu R_{4312}+\l^2 \mu^2 R_{4242} \\
&=& 2(R_{1313}+\l^2\mu^2 R_{2424}+2\l \mu R_{1423}+2\l \mu R_{1243})
\end{eqnarray*}
Similarly, all components of $\psi$ are trivial except 
\begin{equation*}
    \psi_{23}=\psi_{32}=\mu \text{ and } \psi_{14}=\psi_{41}=-\l,
\end{equation*}
and we calculate 
\begin{eqnarray*}
\mathring{R}(\psi,\psi) &=& \sum_{i,j,k,l=1}^n R_{ijkl}\psi_{il}\psi_{jk} \\
&=& \mu^2 R_{2323} -\l \mu R_{2143}-\l \mu R_{2413} \\ 
&& +\mu^2 R_{3232}-\l \mu R_{3142}-\l \mu R_{3412} \\
&& + \l^2 R_{1414}- \l \mu R_{1234}- \l \mu R_{1324} \\
&& +\l^2 R_{4141}-\l \mu R_{4231}-\l \mu R_{4321} \\
&=& 2( \mu^2R_{2323}+\l^2 R_{1414}-2\l \mu R_{2143}-2\l \mu R_{2413}).
\end{eqnarray*}
Using the assumption that $R$ has nonnegative curvature operator of the second kind, we get 
\begin{eqnarray*}
0 &\leq& \frac 1 2\mathring{R}(\vp,\vp) +\frac 1 2 \mathring{R}(\psi,\psi) \\
&=& R_{1313}+\l^2\mu^2 R_{2424}+2\l \mu R_{1423}+2\l \mu R_{1243} \\
&& + \mu^2R_{2323}+\l^2 R_{1414}-2\l \mu R_{2143}-2\l \mu R_{2413} \\
&=& R_{1313}+\l^2 R_{1414}+\mu^2 R_{2323}+\l^2 \mu^2 R_{2424}-4\l \mu R_{1234} \\
&& +2\l \mu (R_{1423}+R_{1342}) \\
&=& R_{1313}+\l^2 R_{1414}+\mu^2 R_{2323}+\l^2 \mu^2 R_{2424}-6\l \mu R_{1234},
\end{eqnarray*}
where we have used the first Bianchi identity $R_{1423}+R_{1342}=-R_{1234}$ in the last step. 
Similarly, if $R$ has positive curvature curvature operator of the second kind, then the above inequality holds strictly. 
This finishes the proof. 

\end{proof}

It was shown in \cite{CGT21} that four-positive (respectively, four-nonnegative) curvature operator of the second kind implies positive (respectively, nonnegative) isotropic curvature. 
Here we prove a slightly stronger result by improving the coefficient in front of $R_{1234}$ in \eqref{eq PIC} from $2$ to $3$. 
The improvement is necessary to prove Theorem \ref{thm kahler}. Moreover, our proof here is a bit cleaner.
\begin{proposition}\label{prop PIC}
Suppose $n=\dim(V)\geq 4$ and $R\in S^2_B(\Lambda^2(V))$ has four-positive (respectively, four-nonnegative) curvature operator of the second kind. Then for any orthonormal four-frame $\{e_1, \cdots, e_4\} \subset V$, we have
\begin{equation}\label{eq PIC}
     R_{1313}+ R_{1414}+ R_{2323}+  R_{2424} - 3 R_{1234} > (\text{ respectively, } \geq   ) \ 0. 
\end{equation}
\end{proposition}
\begin{proof}
Let $\{e_1, \cdots, e_4\} $ be an orthonormal four-frame in $V$ and define
\begin{eqnarray*}
\vp_1 &=& \frac 1 2 \left( e_1\odot e_1+e_2\odot e_2 -e_3\odot e_3 -e_4 \odot e_4 \right),  \\
\vp_2 &=& \frac 1 2 \left( e_1\odot e_1-e_2\odot e_2 +e_3\odot e_3 -e_4 \odot e_4 \right) , \\
\vp_3 &=& e_1 \odot e_4 -e_2\odot e_3, \\
\vp_4 &=&e_1 \odot e_3 +e_2\odot e_4. 
\end{eqnarray*}
Clearly, $\{\vp_1, \cdots, \vp_4\}$ are traceless symmetric two-tensors on $V$ and they are mutually orthogonal with the same magnitude. 
Direct calculation as before shows that 
\begin{eqnarray*}
\mathring{R}(\vp_1,\vp_1) &=& 2(-R_{1212}-R_{3434}+R_{1313}+R_{2424}+R_{1414}+R_{2323}),\\
\mathring{R}(\vp_2,\vp_2) &=& 2(-R_{1313}-R_{2424}+R_{1212}+R_{3434}+R_{1414}+R_{2323}), \\
\mathring{R}(\vp_3,\vp_3) &=& 2(R_{1414}+R_{2323}-2R_{1234}+2R_{1342} ), \\
\mathring{R}(\vp_4,\vp_4) &=& 2(R_{1313}+R_{2424}-2R_{1234}+2R_{1423} ) .
\end{eqnarray*}
If $R$ has four-nonnegative curvature operator of the second kind, we get 
\begin{eqnarray*}
 0&\leq & \mathring{R}(\vp_1,\vp_1)+\mathring{R}(\vp_2,\vp_2)+\mathring{R}(\vp_3,\vp_3)+\mathring{R}(\vp_3,\vp_3) \\
 &=& 6(R_{1414}+R_{2323})+2(R_{1313}+R_{2424})-12R_{1234}.
\end{eqnarray*}
Replacing $\{e_1, e_2, e_3, e_4\} $ by $\{e_2, -e_1, e_3, e_4\} $ in the above argument yields
\begin{eqnarray*}
 0&\leq & 6(R_{2424}+R_{1313})+2(R_{2323}+R_{1414})-12R_{1234}.
\end{eqnarray*}
We obtain, by adding the above two inequalities together, that 
\begin{eqnarray*}
 0 &\leq & 8(R_{2424}+R_{1313}+R_{2323}+R_{1414})-24 R_{1234}.
\end{eqnarray*}
If $R$ has four-positive curvature operator of the second kind, then all the above inequalities become strict and \eqref{eq PIC} holds strictly. 
The proof is complete. 
\end{proof}

\section{Flat or locally irreducible}

The goal of this section is to investigate the curvature operator of the second kind on product manifolds and prove Theorem \ref{thm irreducible}.

We shall prove a slightly more general statement below, which implies that if the curvature operator of the second kind is $(k(n-k)+1)$-nonnegative for some $1\leq k \leq \frac{n}{2}$, then the manifold cannot split off a $k$-dimensional factor unless it is flat. 
In particular, $n$-manifolds with $n$-nonnegative curvature operator of the second kind must be locally irreducible unless it is flat. 

\begin{proposition}\label{prop split}
Let $(M^n,g)$ be an $n$-dimensional Riemannian manifold. 
Suppose that $M$ is isometric $M_1\times M_2$, where $M_1$ is a $k$-dimensional Riemannian manifold and $M_2$ is an $(n-k)$-dimensional Riemannian manifold. 
If $M$ has $(k(n-k)+1)$-nonnegative curvature operator of the second kind, then $M$ is flat. 
\end{proposition}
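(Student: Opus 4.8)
The strategy is to exhibit, at each point $p\in M$, a collection of $k(n-k)+1$ mutually orthogonal traceless symmetric two-tensors on which $\mathring{R}$ evaluates to a sum that controls the curvature of the product, and then to use the $(k(n-k)+1)$-nonnegativity hypothesis to force that curvature to vanish. Write $T_pM = U \oplus W$ with $U = T_p M_1$ of dimension $k$ and $W = T_p M_2$ of dimension $n-k$, and pick orthonormal bases $\{u_1,\dots,u_k\}$ of $U$ and $\{w_1,\dots,w_{n-k}\}$ of $W$. Because $M$ is a Riemannian product, $R_{ijkl}=0$ whenever the four indices are not all in $U$ or all in $W$; in particular all the "mixed" sectional curvatures $R(u_a,w_\b,w_\b,u_a)$ vanish.

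\textbf{Key steps.}
First I would consider the $k(n-k)$ tensors $\psi_{a\b} := \tfrac{1}{\sqrt 2}\, u_a \odot w_\b$ for $1\le a\le k$, $1\le \b\le n-k$; these are unit, traceless, and mutually orthogonal, and by the computation pattern already used in the excerpt (cf. the proof of Proposition~\ref{prop algebraic implications}(2), where $\mathring{R}(\tfrac{1}{\sqrt2}e_1\odot e_i, \tfrac{1}{\sqrt2}e_1\odot e_i)=R_{1i1i}$) one gets $\mathring{R}(\psi_{a\b},\psi_{a\b}) = R(u_a, w_\b, w_\b, u_a) = 0$ for every such pair. Next I would adjoin one more traceless symmetric two-tensor $\psi_0$ that "sees" a chosen curvature, namely for fixed indices take $\psi_0$ built from $u_1,u_2$ (or $w_1,w_2$) of the form $\tfrac{1}{\sqrt2}(u_1\odot u_1 - u_2\odot u_2)$; this is orthogonal to all the $\psi_{a\b}$ since it lives in $S^2(U)$, and as in Lemma~\ref{lemma 3.1} / Proposition~\ref{prop algebraic implications}(3) one has $\mathring{R}(\psi_0,\psi_0) = R_{1212}$, a sectional curvature of the $M_1$ factor. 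Applying $(k(n-k)+1)$-nonnegativity to the orthonormal set $\{\psi_0\} \cup \{\psi_{a\b}\}$ gives $0 \le \mathring{R}(\psi_0,\psi_0) + \sum_{a,\b}\mathring{R}(\psi_{a\b},\psi_{a\b}) = R_{1212}$. Since the pair $u_1,u_2$ was an arbitrary orthonormal pair in $U$, this shows $M_1$ has nonnegative sectional curvature; replacing $\psi_0$ by $\tfrac{1}{\sqrt2}(w_1\odot w_1 - w_2\odot w_2)$ (when $n-k\ge 2$) does the same for $M_2$. To upgrade from nonnegative to zero I would instead take $\psi_0 = \tfrac{1}{\sqrt2}(u_1\odot u_1 - w_1\odot w_1)$: this is again traceless, of unit norm, orthogonal to the $\psi_{a\b}$, and $\mathring{R}(\psi_0,\psi_0) = \tfrac12(R(u_1,u_1,u_1,u_1)\text{-type terms})$ — more precisely one computes $\mathring{R}(\psi_0,\psi_0) = -R(u_1,w_1,w_1,u_1) = 0$ by the product structure, so this particular $\psi_0$ gives no information; the useful move is to combine the conclusions "$\sec_{M_1}\ge 0$" and "$\sec_{M_2}\ge 0$" with a second application of the hypothesis using a \emph{difference} tensor supported in a single factor, e.g. for the $M_1$ side use $\psi_0 = \tfrac{1}{\sqrt6}(u_1\odot u_1 + u_2\odot u_2 - 2u_3\odot u_3)$-style tensors together with the $\psi_{a\b}$ to get linear combinations of $M_1$-sectional curvatures that are $\le 0$, and conclude all sectional curvatures of $M_1$ vanish; symmetrically for $M_2$. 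A product of two flat factors is flat, giving the conclusion.

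\textbf{Main obstacle.}
The delicate point is the last step: producing, among the $(k(n-k)+1)$-element orthonormal families, enough independent traceless tensors supported in a single factor so that the resulting inequalities pin down \emph{all} sectional curvatures of that factor to be simultaneously $\ge 0$ and (via a complementary family) $\le 0$. The count is tight — we are allowed exactly one tensor beyond the $k(n-k)$ "mixed" ones — so I expect to need the trick, already visible in Lemma~\ref{lemma 3.1} and in the $k=1$ case of the excerpt, that a \emph{single} cleverly chosen diagonal traceless tensor in one factor encodes a nontrivial linear combination of that factor's sectional/Ricci curvatures; iterating over choices of basis and of which eigenvector one uses, together with the already-established $\Ric\ge 0$-type bounds, should close the argument. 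The bookkeeping of which index set each tensor is supported on, and verifying orthogonality to the mixed block, is the part requiring care but no new ideas.
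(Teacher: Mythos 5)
Your first half is exactly the paper's argument: pad the $k(n-k)$ mixed tensors $\frac{1}{\sqrt2}u_a\odot w_\b$ (which evaluate to zero) with one more traceless tensor living in a single factor, to conclude that each factor has nonnegative sectional curvature. The second half is where your plan breaks down. Your ``useful move'' is to use traceless tensors supported entirely within one factor, such as $\frac{1}{\sqrt6}(u_1\odot u_1 + u_2\odot u_2 - 2u_3\odot u_3)$. But if $\vp$ lies in $S^2_0(T_pM_1)\subset S^2_0(T_pM)$, then $\mathring R(\vp,\vp)$ is precisely the curvature operator of the second kind of $M_1$ evaluated on $\vp$, and your $(k(n-k)+1)$-nonnegativity hypothesis only yields $\mathring R(\vp,\vp)\ge 0$, i.e.\ that $M_1$ has nonnegative curvature operator of the second kind. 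That is a far weaker conclusion than flatness — the round sphere $\Sph^k$ has nonnegative (indeed positive) curvature operator of the second kind, so no amount of iterating over bases or eigenvectors inside a single factor can pin the sectional curvatures down to zero. You cannot extract an inequality of the ``wrong sign'' from data confined to one block.

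What you are missing is a cross-factor diagonal tensor that is \emph{globally} traceless but \emph{not} traceless within either factor. The paper uses $\psi = \frac{1}{\sqrt{nk(n-k)}}\big((n-k)\sum_{i=1}^k e_i\odot e_i - k\sum_{p=k+1}^n e_p\odot e_p\big)$, still orthogonal to all the mixed tensors. Because $\psi$ has nonzero diagonal entries of the \emph{same} sign at several indices within each factor, the quadratic form $\mathring R(\psi,\psi)=\sum_{i,j}R_{ijji}\psi_{ii}\psi_{jj}$ picks up products $\psi_{ii}\psi_{jj}>0$ multiplying $R_{ijji}=-R_{ijij}$, i.e.\ the \emph{negatives} of sectional curvatures; the mixed terms $R_{ippi}$ drop out by the product structure. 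One then computes $nk(n-k)\,\mathring R(\psi,\psi)=-(n-k)^2 S_1 - k^2 S_2\le 0$, and the $(k(n-k)+1)$-nonnegativity gives the reverse inequality, forcing $S_1=S_2=0$; together with nonnegative sectional curvature this gives flatness. You actually grazed this idea when you tried $\psi_0=\frac{1}{\sqrt2}(u_1\odot u_1 - w_1\odot w_1)$: it is of the correct type but degenerate, because it has only one nonzero diagonal entry in each factor, hence no $\psi_{ii}\psi_{jj}$ product survives within a block. A tensor such as $u_1\odot u_1 + u_2\odot u_2 - 2w_1\odot w_1$ (or the paper's fully symmetric $\psi$) is the nondegenerate version that closes the argument.
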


\begin{remark}
Proposition \ref{prop split} is sharp in the sense that the assumption in general cannot be weakened to $(k(n-k)+2)$-nonnegative curvature operator of the second kind. 
For instance, when $n=4$ and $k=1$, $\mathbb{S}^3 \times \mathbb{S}^1$ has $5$-nonnegative curvature operator of the second kind (see Example \ref{eg cylinder}), and when $n=4$ and $k=2$, 
$\mathbb{S}^2 \times \mathbb{S}^2$ has $6$-nonnegative curvature operator of the second kind (see Example \ref{eg product of spheres}). 
\end{remark}

\begin{proof}[Proof of Proposition \ref{prop split}]
We first show that both $M_1$ and $M_2$ have nonnegative sectional curvature. 
For $M_1^k$, it suffices to consider the case $k\geq 2$, as one-dimensional Riemannian manifolds are flat. 
Let $\sigma \subset T_pM_1$ be a two-plane spanned by two orthonormal vectors $e_1$ and $e_2$. 
We extend $\{e_1,e_2\}$ to an orthonormal basis $\{e_1, \cdots e_k\}$ of $T_{p}M_1$. Let $\{e_{k+1}, \cdots, e_n\}$ be an orthonormal basis of $T_qM_2$. Then $\{e_1, \cdots, e_n\}$ is an orthonormal basis of $T_{(p,q)}M$.

Define
\begin{equation*}
    \vp_{ip}=\frac{1}{\sqrt{2}} e_i \odot e_p \text{   for } 1\leq i \leq k, \ k+1 \leq p \leq n,
\end{equation*}
and 
\begin{equation*}
    \xi=\frac{1}{\sqrt{2}}e_1 \odot e_2.
\end{equation*}
It's easy to verify that $\{\vp_{ip}\}_{1\leq i \leq k, \ k+1 \leq p \leq n} \cup \{\xi \}$ is an orthonormal set of dimension $k(n-k)+1$ in $S^2_0(\Lambda^2 T_{(p,q)}M)$. 
Since $M$ has $(k(n-k)+1)$-nonnegative curvature operator of the second kind, we have 
\begin{equation*}
    \mathring{R}(\xi,\xi) +\sum_{1\leq i \leq k, \ k+1 \leq p \leq n} \mathring{R}(\vp_{ip},\vp_{ip}) \geq 0.
\end{equation*}
Notice that 
\begin{equation}
    \mathring{R}(\vp_{ip},\vp_{ip}) =R_{ipip} =0 \text{ for } 1\leq i \leq k, \ k+1 \leq p \leq n. 
\end{equation}
due to the product structure.
We therefore have $\mathring{R}(\xi,\xi) =R_{1212} \geq 0$. Since $\sigma$ is arbitrary, we conclude that $M_1$ has nonnegative sectional curvature. $M_2$ is also nonnegatively curved by a similar argument.

The next step is to show that both $M_1$ and $M_2$ have vanishing scalar curvature.
Let $\{e_1, \cdots, e_n\}$ be an orthonormal basis of $T_{(p,q)}M$ with 
$e_1, \cdots e_k \in  T_{p}M_1$ and $e_{k+1}, \cdots, e_n \in T_qM_2$. 
Define $\vp_{ip}$ as before and let 
\begin{equation*}
    \psi=\frac{1}{2\sqrt{nk(n-k)}}\left((n-k) \sum_{i=1}^k e_i \odot e_i - k \sum_{p=k+1}^n e_p \odot e_p \right).
\end{equation*}
One easily verifies that $\{\vp_{ip}\}_{1\leq i \leq k, \ k+1 \leq p \leq n} \cup \{\psi \}$ is an orthonormal set of dimension $k(n-k)+1$ in $S^2_0(\Lambda^2 T_{(p,q)}M)$. 
Since $M$ has $(k(n-k)+1)$-nonnegative curvature operator of the second kind, we have 
\begin{equation*}
    \mathring{R}(\psi,\psi) = \mathring{R}(\psi,\psi) +\sum_{1\leq i \leq k, \ k+1 \leq p \leq n} \mathring{R}(\vp_{ip},\vp_{ip}) \geq 0.
\end{equation*}
We calculate that
\begin{eqnarray*}
0 &\leq & n k(n-k) \mathring{R}(\psi,\psi) \\
&=& (n-k)^2 \sum_{1\leq i,j \leq k}R_{ijji} 
-k(n-k) \sum_{1\leq i \leq k, k+1 \leq p \leq k}R_{ippi} \\
&& -k(n-k)\sum_{1\leq j \leq k, k+1 \leq q \leq n}R_{qjjq} +k^2 \sum_{k+1\leq p,q \leq n}R_{pqqp} \\
&=& -(n-k)^2 S_1(p)-k^2 S_2(q),
\end{eqnarray*}
where $S_i$ denotes the scalar curvature of $M_i$ for $i=1,2$.
The above inequality forces $S_1(p) \leq 0$ and $S_2(q) \leq 0$. 
On the other hand, we must have $S_1(p)\geq 0$ and $S_2(q) \geq 0$, as we have showed that both $M_1$ and $M_2$ are nonnegatively curved. Therefore both $M_1$ and $M_2$ must be scalar flat, and thus flat in view of the nonnegativity of their sectional curvatures. 
Hence $M$ is flat.

\end{proof}

\begin{proof}[Proof of Theorem \ref{thm irreducible}]
Since $n$-nonnegative curvature operator of the second kind implies $(k(n-k)+1)$-nonnegative curvature operator of the second kind for all $1\leq k \leq n-1$, the universal cover of $M$ cannot split as a product of Riemannian manifolds of lower dimensions unless it is flat. 
Thus $M$ is either flat or locally irreducible.  
\end{proof}

\section{K\"ahler Manifolds}

In this section, we prove Theorem \ref{thm kahler}. We indeed prove it under a weaker assumption.
\begin{theorem}\label{thm kahler stronger}
Let $(M^{2m},g)$ be a K\"ahler manifold of complex dimension $m\geq 2$. Suppose that there exists $\beta >1$ such that for any $p\in M$ and any orthonormal four-frame $\{e_1, \cdots, e_4\} \subset T_pM$, it holds that 
\begin{equation}\label{eq 4.1}
    R_{1313}+R_{1414}+R_{2323}+R_{2424}-2\beta R_{1234} \geq 0.
\end{equation}
Then $M$ must be flat if $m\geq 3$. 
If $m=2$, then $M$ is conformally flat, which means locally it is either flat or the product of two complex curves with constant curvature of opposite values.
\end{theorem}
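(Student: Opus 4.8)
The plan is to exploit the rigidity of the K\"ahler curvature tensor against condition \eqref{eq 4.1}. The key point is that on a K\"ahler manifold, the complex structure $J$ produces, for any unit vector $e_1$, a \emph{holomorphic plane} $\{e_1, Je_1\}$ along which the curvature behaves very differently from a generic plane: the term $R(e_1, Je_1, e_1, Je_1)$ is the holomorphic sectional curvature, while $R(e_1, Je_1, e_3, Je_3)$ couples neighboring holomorphic planes. Concretely, I would take an orthonormal four-frame of the form $\{e_1, e_2, e_3, e_4\} = \{e_1, Je_1, e_3, Je_3\}$, where $e_1, e_3$ are chosen so that $e_3 \perp e_1, Je_1$ (possible since $m \geq 2$). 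For such a frame, the K\"ahler identities give $R_{1212} = R(e_1,Je_1,e_1,Je_1)$, $R_{3434}=R(e_3,Je_3,e_3,Je_3)$, and crucially the ``mixed'' term $R_{1234} = R(e_1, Je_1, e_3, Je_3)$ equals (up to the sign/factor conventions in the paper) a genuine component of the K\"ahler curvature rather than something that can be made to vanish. Plugging this frame into \eqref{eq 4.1} yields an inequality that, because of the factor $\beta > 1$ multiplying $R_{1234}$, is strictly stronger than what nonnegative holomorphic bisectional-type curvature would allow.

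The main step is then a bootstrapping argument. First I would derive from \eqref{eq 4.1}, applied to suitable holomorphic frames, that the holomorphic sectional curvature $H(e_1) = R(e_1,Je_1,e_1,Je_1)$ is nonnegative everywhere (this should follow by a frame choice where the mixed term drops out or by averaging). Then, using \eqref{eq 4.1} with the genuinely mixed frame $\{e_1, Je_1, e_3, Je_3\}$ together with the K\"ahler Bianchi identity $R_{1234} = R_{1324} + \text{(terms)}$, one relates the single mixed term $R(e_1,Je_1,e_3,Je_3)$ to holomorphic sectional curvatures $H$ at various points and directions. The presence of the coefficient $\beta > 1$ forces these holomorphic sectional curvatures to be zero: one sets up an inequality of the shape $H(\text{something}) + H(\text{something}) \geq 2\beta\, (\text{mixed})$ alongside a reverse-type bound, and the mismatch between $2$ and $2\beta$ closes the gap only when all the $H$'s vanish. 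Once $H \equiv 0$ on all holomorphic planes at all points, a standard polarization argument (the holomorphic sectional curvature determines the full K\"ahler curvature tensor) gives $R \equiv 0$, i.e.\ $M$ is flat.

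I expect the main obstacle to be the careful bookkeeping of the K\"ahler curvature components under the chosen frames — getting the exact coefficients right when one substitutes $e_2 = Je_1$, $e_4 = Je_3$ into \eqref{eq 4.1}, and correctly invoking the first Bianchi identity to express $R_{1234}$ in terms of the quantities $R_{1313}, R_{1414}$, etc. The conceptual content is clear (the factor $\beta>1$ overdetermines the system), but the algebra must be arranged so that one extracts a clean inequality of the form ``nonnegative combination of holomorphic sectional curvatures $\geq (\beta-1)\times$(positive multiple of the same)'', which then forces vanishing. A secondary subtlety is ensuring $m \geq 2$ is genuinely used: for $m=1$ there is no room to pick $e_3 \perp e_1, Je_1$, and indeed flat surfaces are not the only K\"ahler surfaces, so the argument must visibly break down there — it does, since the mixed term $R_{1234}$ requires four genuinely independent directions.
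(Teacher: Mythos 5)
Your starting move — substitute a holomorphic four-frame $\{X, JX, Y, JY\}$ into \eqref{eq 4.1} and exploit the slack provided by $\beta > 1$ — is the same as the paper's, but your bootstrapping plan has a concrete algebraic error. You propose to first show that holomorphic sectional curvature $H(X)=R(X,JX,X,JX)$ is nonnegative, and then relate the mixed term $R_{1234}$ to $H$ at various directions. But for the frame $\{e_1,e_2,e_3,e_4\}=\{X,JX,Y,JY\}$, the isotropic-type expression in \eqref{eq 4.1} involves only $R_{1313}$, $R_{1414}$, $R_{2323}$, $R_{2424}$ and $R_{1234}$; the holomorphic sectional curvature $H(X)=R_{1212}$ is simply not among these terms and cannot be extracted ``by a frame choice where the mixed term drops out.'' The quantity you should be chasing is the \emph{orthogonal bisectional curvature} $R(X,JX,Y,JY)=R_{1234}$, not $H$.

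The identity that makes the argument close, which you gesture at (``K\"ahler Bianchi identity'') but never state, is that for the frame $\{X,JX,Y,JY\}$ the first Bianchi identity together with $R(X,Y,Z,W)=R(X,Y,JZ,JW)$ forces
\begin{equation*}
2\,R_{1234}\;=\;R_{1313}+R_{1414}+R_{2323}+R_{2424}.
\end{equation*}
Plugging this into \eqref{eq 4.1} gives $(1-\beta)\bigl(R_{1313}+R_{1414}+R_{2323}+R_{2424}\bigr)\geq 0$, while replacing $e_1$ by $-e_1$ in \eqref{eq 4.1} flips the sign of $R_{1234}$ and produces the companion lower bound $R_{1313}+R_{1414}+R_{2323}+R_{2424}\geq 0$. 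Since $\beta>1$, the sum must vanish, hence $R(X,JX,Y,JY)=0$; this is the vanishing of orthogonal bisectional curvature, not of $H$. Finally, your proposed endgame (show $H\equiv 0$, then polarize to get $R\equiv 0$) is not what the subsequent polarization delivers: substituting $X=(Z\pm W)/\sqrt{2}$ and $X=(Z\pm JW)/\sqrt{2}$ into the vanishing-bisectional identity only yields $H(Z)+H(W)=0$, which for $m=2$ gives $H(Z)=-H(W)$ rather than $H\equiv 0$. The paper instead deduces Ricci-flatness from this and then cites Micallef--Wang (nonnegative isotropic curvature plus Ricci-flat implies flat), or alternatively the Ni--Wang--Zheng classification of $\operatorname{Ric}^{\perp}$-flat K\"ahler manifolds; some such final input is needed to close your argument.
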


The curvature assumption in Theorem \ref{thm kahler stronger} is optimal in the sense that there are many K\"ahler manifolds, such as compact Hermitian symmetric spaces, satisfying \eqref{eq 4.1} with $\beta =1$, namely they have nonnegative isotropic curvature.

\begin{proof}[Proof of Theorem \ref{thm kahler stronger}]
The proof here is inspired by Brendle's argument \cite[Proposition 9.18]{Brendle10book} to prove that K\"ahler-Einstein manifolds with nonnegative isotropic curvature must have positive orthogonal bisectional curvature if it is not flat. The assumption $\beta>1$ in some sense compensates the Einstein condition.
Most of the calculation below is exactly the same as in Brendle's argument \cite[Proposition 9.18]{Brendle10book}.

Recall that on a K\"ahler manifold there exists a section $J$ of the endomorphism bundle $\text{End}(TM)$ with the following properties: 
\begin{enumerate}
    \item $J$ is parallel;
    \item for each point $p\in M$, we have $J^2=-\id$ and $g(X,Y)=g(JX,JY)$ for all $X,Y\in T_pM$;
    \item the Riemann curvature tensor satisfies 
    $$R(X,Y,Z,W)=R(X,Y,JZ,JW)$$
    for all $X,Y,Z,W\in T_pM$.
\end{enumerate}

\textbf{Claim A:} $M$ has vanishing orthogonal bisectional curvature, namely for all unit vectors $X,Y \in T_pM$ satisfying $g(X,Y)=g(X,JY)=0$, it holds that
\begin{equation}\label{eq 4.3}
    R(X,JX,Y,JY)=0.
\end{equation}

\textit{Proof of Claim A}: Consider two unit vectors $X,Y \in T_pM$ with $g(X,Y)=g(JX,Y)=0$. Notice that by (2) we have 
\begin{equation*}
    g(JX, Y)=g(J^2X,JY)=-g(X,JY) =0
\end{equation*} 
and $g(JX,JY)=g(X,Y)=0$. 
Thus $\{X,JX,Y,JY\}$ is an orthonormal four-frame in $T_pM$.

Applying \eqref{eq 4.1} to the orthonormal four-frame $\{X,JX,Y,JY\}$ produces 
\begin{eqnarray*}
 && R(X,Y,X,Y)+R(JX,Y,JX,Y)\\
 &&+ R(X,JY,X,JY) +R(JX,JY,JX,JY) \\
 &\geq& 2\beta R(X,JX,Y,JY).
\end{eqnarray*}
Notice that \eqref{eq 4.1} implies \begin{equation}\label{eq 4.2}
    R_{1313}+R_{1414}+R_{2323}+R_{2424}\geq 0
\end{equation}
for any orthonormal four-frame $\{e_1, \cdots, e_4\} \subset T_pM$.
Applying \eqref{eq 4.2} to the orthonormal four-frame $\{X,JX,Y,JY\}$ yields
\begin{align*}
   & R(X,Y,X,Y)+R(JX,Y,JX,Y)\\
 &+ R(X,JY,X,JY)+R(JX,JY,JX,JY) \geq 0.
\end{align*}

On the other hand, it follows from the first Bianchi identity and (3) that 
\begin{eqnarray*}
 2 R(X,JX,Y,JY)& = & R(X,Y,X,Y)+R(JX,Y,JX,Y)\\
 &&+ R(X,JY,X,JY)+R(JX,JY,JX,JY).
\end{eqnarray*}
Therefore, we obtain that 
\begin{equation*}
    R(X,JX,Y,JY) \geq 0
\end{equation*}
and 
\begin{equation*}
    2 R(X,JX,Y,JY) \geq 2\beta R(X,JX,Y,JY) 
\end{equation*}
Since $\beta >1$, we must have 
\begin{equation*}%\label{eq 4.3}
    R(X,JX,Y,JY)=0.
\end{equation*}
This finishes the proof of Claim A.

\textbf{Claim B:} $M$ has vanishing scalar curvature. 

\textit{Proof of Claim B:} 
Let $Z,W \in T_p M$ be two unit vectors satisfying $g(Z,W)=g(Z,JW)=0$. 
Applying \eqref{eq 4.3} to the vectors $X=\frac{1}{\sqrt{2}}(Z+W)$ and $Y=\frac{1}{\sqrt{2}}(Z-W)$ yields 
\begin{eqnarray*}
0 &=& R(Z+W, JZ+JW, Z-W, JZ-JW) \\
&=& R(Z,JZ,Z,JZ)+R(Z,JZ,W,JW) \\
&& -R(Z,JZ,Z,JW )-R(Z,JZ,W,JZ) \\
&& +R(W,JW, Z,JZ)+R(W,JW,W,JW)\\
&& -R(W,JW, Z,JW)-R(W,JW,W,JZ) \\
&& + R(Z,JW, Z,JZ)+R(Z,JW,W,JW) \\
&& -R(Z,JW, Z,JW)-R(Z,JW,W,JZ) \\
&& +R(W,JZ,Z,JZ)+R(W,JZ,W,JW) \\
&& -R(W,JZ,Z,JW) -R(W,JZ, W, JZ) \\
&=& R(Z,JZ,Z,JZ)+R(W,JW,W,JW) \\
&&+2R(Z,JZ,W,JW) -4R(Z,JW,Z,JW)
\end{eqnarray*}
Similarly, we apply \eqref{eq 4.3} to the vectors $X=\frac{1}{\sqrt{2}}(Z+JW)$ and $Y=\frac{1}{\sqrt{2}}(Z-JW)$ and obtain 
\begin{eqnarray*}
0 &=& R(Z+W, JZ+JW, Z-W, JZ-JW) \\
&=& R(Z,JZ,Z,JZ)+R(W,JW,W,JW) \\
&&+2R(Z,JZ,W,JW) -4R(Z,W,Z,W).
\end{eqnarray*}
Averaging the above two equations yields,
\begin{eqnarray*}
0 &=& R(Z,JZ,Z,JZ)+R(W,JW,W,JW) \\
&& +2R(Z,JZ,W,JW)-2R(Z,JW,Z,JW)-2R(Z,W,Z,W).
\end{eqnarray*}
Noticing that by the first Bianchi identity and (3), we have
\begin{eqnarray*}
&& R(Z,JZ,W,JW)-R(Z,JW,Z,JW)-R(Z,W,Z,W) \\
&=& R(Z,JZ,W,JW)+R(Z,JW,JZ,W) +R(Z,W,JW,JZ) \\
&=& 0.
\end{eqnarray*}
Therefore, we have proved that 
\begin{equation}\label{eq 4.4}
    R(Z,JZ,Z,JZ)+R(W,JW,W,JW) =0
\end{equation}
for all unit vectors satisfying $g(Z,W)=g(Z,JW)=0$.
It follows form \eqref{eq 4.3} and \eqref{eq 4.4} that the scalar curvature of $M$ vanishes, as the scalar curvature of a K\"ahler manifold is given by 
\begin{equation*}\label{eq scalar curvature}
    S=2\sum_{i,j=1}^m R(e_i,Je_i,e_j,Je_j),
\end{equation*}
where $\{e_1,\cdots, e_m, Je_1, \cdots, Je_m\}$ is an orthonormal basis of $T_pM$.
This proves Claim B.

We continue to prove Theorem \ref{thm kahler stronger}. Clearly \eqref{eq 4.1} implies that $M$ has nonnegative isotropic curvature. 
Therefore, Theorem \ref{thm kahler stronger} follows from a result of Micallef and Wang \cite[Proposition 2.5]{MM93}, which asserts that a Riemannian manifold of real dimension $n\geq 4$ with nonnegative isotropic curvature and vanishing scalar curvature must be flat if $n\geq 5$ and must have vanishing Weyl tensor if $n=4$. 

Alternatively, one can argue as follows after proving Claim A. Note that \eqref{eq 4.3} implies that $M$ has vanishing orthogonal Ricci curvature $\Ric^\perp \equiv 0$ ( $\Ric^\perp$ was introduced by Ni and Zheng \cite{NZ18} in the study of comparison theorems for K\"ahler manifolds).
The conclusion then follows from the classification of $\Ric^\perp$-flat K\"ahler manifold by Ni, Wang and Zheng \cite[Theorem 6.1]{NWZ21}.
\end{proof}

\begin{proof}[Proof of Theorem \ref{thm kahler}]
Since $M$ has four-nonnegative curvature operator of the second kind, it satisfies \eqref{eq 4.1} with $\beta=\frac 3 2$ by Proposition \ref{prop PIC}.
By Theorem \ref{thm kahler stronger}, $M$ must be flat if $m\geq 3$. If $m=2$, one also uses Theorem \ref{thm irreducible} to rule out the product of two complex curves with constant curvature of opposite values. Hence $M$ must be flat. 
\end{proof}

\section{Proof of Theorem \ref{thm Nishikawa}}

\begin{proof}[Proof of Theorem \ref{thm Nishikawa}]
Suppose that $M$ is non-flat. 
Let $\widetilde{M}$ be the universal cover of $M$ with the lifted metric $\tilde{g}$. 
Then $(\widetilde{M},\tilde{g})$ also has three-nonnegative curvature operator of the second kind and nonnegative Ricci curvature by part (3) of Proposition \ref{prop algebraic implications}. 
By the Cheeger-Gromoll splitting theorem \cite{CG72}, $\widetilde{M}$ is isometric to a product of the form
$N^{n-k} \times \R^k$, where $N$ is compact. 
By Theorem \ref{thm irreducible}, we know that $M$ is locally irreducible, which implies that $k=0$. 
Hence $\widetilde{M}$ is compact. 

On the other hand, it follows from part (4) of Proposition \ref{prop algebraic implications} that $\widetilde{M}$ has weakly PIC1. 
By evoking the classification of closed simply-connected irreducible Riemannian manifold with weakly PIC1 (see for example \cite[Theorem 9.33]{Brendle10book}), we conclude that one of the following statements holds:
\begin{enumerate}
    \item $\widetilde{M}$ is diffeomorphic to $\mathbb{S}^n$;
    \item $n=2m$ and $\widetilde{M}$ is a K\"ahler manifold; %biholomorphic to $\mathbb{C} P^m$; 
    \item $\widetilde{M}$ is isometric to a compact irreducible symmetric space.
\end{enumerate}
Notice that (2) cannot occur in view of Theorem \ref{thm kahler}.
Also, (3) cannot occur when $n\leq 4$ because 
%the only compact irreducible symmetric space other than the round sphere is 
$\mathbb{CP}^2$ does not have three-nonnegative curvature operator of the second kind. 

Hence we can conclude that $M$ is either flat, or diffeomorphic to a spherical space form, or isometric to a quotient of a compact irreducible symmetric space. 
In particular, $M$ is diffeomorphic to a Riemannian locally symmetric space.
The proof of Theorem \ref{thm Nishikawa} is now complete. 
\end{proof}

\section*{Acknowledgments}
The author is grateful to Professors Xiaodong Cao, Matthew Gursky and Hung Tran for some helpful discussions. 
He also would like to thank the anonymous referee for catching some typos and making comments and suggestions that improved the readability of this paper.

\bibliographystyle{alpha}
\bibliography{ref}

\begin{thebibliography}{NPWW22}

\bibitem[AN09]{AN09}
Ben Andrews and Huy Nguyen.
\newblock Four-manifolds with 1/4-pinched flag curvatures.
\newblock {\em Asian J. Math.}, 13(2):251--270, 2009.

\bibitem[BE69]{BE69}
M.~Berger and D.~Ebin.
\newblock Some decompositions of the space of symmetric tensors on a
  {R}iemannian manifold.
\newblock {\em J. Differential Geometry}, 3:379--392, 1969.

\bibitem[Bes08]{Besse08}
Arthur~L. Besse.
\newblock {\em Einstein manifolds}.
\newblock Classics in Mathematics. Springer-Verlag, Berlin, 2008.
\newblock Reprint of the 1987 edition.

\bibitem[BK78]{BK78}
Jean-Pierre Bourguignon and Hermann Karcher.
\newblock Curvature operators: pinching estimates and geometric examples.
\newblock {\em Ann. Sci. \'{E}cole Norm. Sup. (4)}, 11(1):71--92, 1978.

\bibitem[Bor60]{Borel60}
Armand Borel.
\newblock On the curvature tensor of the {H}ermitian symmetric manifolds.
\newblock {\em Ann. of Math. (2)}, 71:508--521, 1960.

\bibitem[Bre08]{Brendle08}
Simon Brendle.
\newblock A general convergence result for the {R}icci flow in higher
  dimensions.
\newblock {\em Duke Math. J.}, 145(3):585--601, 2008.

\bibitem[Bre10]{Brendle10book}
Simon Brendle.
\newblock {\em Ricci flow and the sphere theorem}, volume 111 of {\em Graduate
  Studies in Mathematics}.
\newblock American Mathematical Society, Providence, RI, 2010.

\bibitem[BS08]{BS08}
Simon Brendle and Richard~M. Schoen.
\newblock Classification of manifolds with weakly {$1/4$}-pinched curvatures.
\newblock {\em Acta Math.}, 200(1):1--13, 2008.

\bibitem[BS09]{BS09}
Simon Brendle and Richard Schoen.
\newblock Manifolds with {$1/4$}-pinched curvature are space forms.
\newblock {\em J. Amer. Math. Soc.}, 22(1):287--307, 2009.

\bibitem[BS11]{BS11}
Simon Brendle and Richard Schoen.
\newblock Curvature, sphere theorems, and the {R}icci flow.
\newblock {\em Bull. Amer. Math. Soc. (N.S.)}, 48(1):1--32, 2011.

\bibitem[BW08]{BW08}
Christoph B{\"o}hm and Burkhard Wilking.
\newblock Manifolds with positive curvature operators are space forms.
\newblock {\em Ann. of Math. (2)}, 167(3):1079--1097, 2008.

\bibitem[CG72]{CG72}
Jeff Cheeger and Detlef Gromoll.
\newblock The splitting theorem for manifolds of nonnegative {R}icci curvature.
\newblock {\em J. Differential Geometry}, 6:119--128, 1971/72.

\bibitem[CGT21]{CGT21}
Xiaodong Cao, Matthew~J. Gursky, and Hung Tran.
\newblock Curvature of the second kind and a conjecture of {N}ishikawa.
\newblock {\em Comment. Math. Helv., to appear, arXiv:2112.01212}, 2021.

\bibitem[Che91]{Chen91}
Haiwen Chen.
\newblock Pointwise {$\frac14$}-pinched {$4$}-manifolds.
\newblock {\em Ann. Global Anal. Geom.}, 9(2):161--176, 1991.

\bibitem[CV60]{CV60}
Eugenio Calabi and Edoardo Vesentini.
\newblock On compact, locally symmetric {K}\"{a}hler manifolds.
\newblock {\em Ann. of Math. (2)}, 71:472--507, 1960.

\bibitem[GM75]{GM75}
S.~Gallot and D.~Meyer.
\newblock Op\'{e}rateur de courbure et laplacien des formes diff\'{e}rentielles
  d'une vari\'{e}t\'{e} riemannienne.
\newblock {\em J. Math. Pures Appl. (9)}, 54(3):259--284, 1975.

\bibitem[Ham82]{Hamilton82}
Richard~S. Hamilton.
\newblock Three-manifolds with positive {R}icci curvature.
\newblock {\em J. Differential Geom.}, 17(2):255--306, 1982.

\bibitem[Ham86]{hamilton86}
Richard~S. Hamilton.
\newblock Four-manifolds with positive curvature operator.
\newblock {\em J. Differential Geom.}, 24(2):153--179, 1986.

\bibitem[Kas93]{Kashiwada93}
Toyoko Kashiwada.
\newblock On the curvature operator of the second kind.
\newblock {\em Natur. Sci. Rep. Ochanomizu Univ.}, 44(2):69--73, 1993.

\bibitem[Koi79a]{Koiso79a}
Norihito Koiso.
\newblock A decomposition of the space {${\mathcal{M}}$} of {R}iemannian
  metrics on a manifold.
\newblock {\em Osaka Math. J.}, 16(2):423--429, 1979.

\bibitem[Koi79b]{Koiso79b}
Norihito Koiso.
\newblock On the second derivative of the total scalar curvature.
\newblock {\em Osaka Math. J.}, 16(2):413--421, 1979.

\bibitem[Li22a]{Li22Kahler}
Xiaolong Li.
\newblock K\"ahler manifolds and the curvature operator of the second kind.
\newblock {\em arXiv:2208.14505}, 2022.

\bibitem[Li22b]{Li22PAMS}
Xiaolong Li.
\newblock K\"ahler surfaces with six-positive curvature operator of the second
  kind.
\newblock {\em Proc. Amer. Math. Soc., to appear, arXiv:2207.00520}, 2022.

\bibitem[Li22c]{Li22JGA}
Xiaolong Li.
\newblock Manifolds with {$4\frac 12$}-{P}ositive {C}urvature {O}perator of the
  {S}econd {K}ind.
\newblock {\em J. Geom. Anal.}, 32(11):281, 2022.

\bibitem[Li22d]{Li22product}
Xiaolong Li.
\newblock Product manifolds and the curvature operator of the second kind.
\newblock {\em arXiv:2209.02119}, 2022.

\bibitem[Mey71]{Meyer71}
Daniel Meyer.
\newblock Sur les vari\'{e}t\'{e}s riemanniennes \`a op\'{e}rateur de courbure
  positif.
\newblock {\em C. R. Acad. Sci. Paris S\'{e}r. A-B}, 272:A482--A485, 1971.

\bibitem[MM88]{MM88}
Mario~J. Micallef and John~Douglas Moore.
\newblock Minimal two-spheres and the topology of manifolds with positive
  curvature on totally isotropic two-planes.
\newblock {\em Ann. of Math. (2)}, 127(1):199--227, 1988.

\bibitem[MRS20]{MRS20}
Josef Mike\v{s}, Vladimir Rovenski, and Sergey~E. Stepanov.
\newblock An example of {L}ichnerowicz-type {L}aplacian.
\newblock {\em Ann. Global Anal. Geom.}, 58(1):19--34, 2020.

\bibitem[MW93]{MM93}
Mario~J. Micallef and McKenzie~Y. Wang.
\newblock Metrics with nonnegative isotropic curvature.
\newblock {\em Duke Math. J.}, 72(3):649--672, 1993.

\bibitem[Nis86]{Nishikawa86}
Seiki Nishikawa.
\newblock On deformation of {R}iemannian metrics and manifolds with positive
  curvature operator.
\newblock In {\em Curvature and topology of {R}iemannian manifolds ({K}atata,
  1985)}, volume 1201 of {\em Lecture Notes in Math.}, pages 202--211.
  Springer, Berlin, 1986.

\bibitem[NPW22]{NPW22}
Jan Nienhaus, Peter Petersen, and Matthias Wink.
\newblock Betti numbers and the curvature operator of the second kind.
\newblock {\em arXiv:2206.14218}, 2022.

\bibitem[NPWW22]{NPWW22}
Jan Nienhaus, Peter Petersen, Matthias Wink, and William Wylie.
\newblock Holonomy restrictions from the curvature operator of the second kind.
\newblock {\em arXiv:2208.13820}, 2022.

\bibitem[NW07a]{NWolfson07}
Lei Ni and Jon Wolfson.
\newblock Positive complex sectional curvature, {R}icci flow and the
  differential sphere theorem.
\newblock {\em arXiv:0706.0332}, 2007.

\bibitem[NW07b]{NWu07}
Lei Ni and Baoqiang Wu.
\newblock Complete manifolds with nonnegative curvature operator.
\newblock {\em Proc. Amer. Math. Soc.}, 135(9):3021--3028, 2007.

\bibitem[NW10]{NW10}
Lei Ni and Burkhard Wilking.
\newblock Manifolds with {$1/4$}-pinched flag curvature.
\newblock {\em Geom. Funct. Anal.}, 20(2):571--591, 2010.

\bibitem[NWZ21]{NWZ21}
Lei Ni, Qingsong Wang, and Fangyang Zheng.
\newblock Manifolds with positive orthogonal {R}icci curvature.
\newblock {\em Amer. J. Math.}, 143(3):833--857, 2021.

\bibitem[NZ18]{NZ18}
Lei Ni and Fangyang Zheng.
\newblock Comparison and vanishing theorems for {K}\"{a}hler manifolds.
\newblock {\em Calc. Var. Partial Differential Equations}, 57(6):Art. 151, 31,
  2018.

\bibitem[OT79]{OT79}
Koichi Ogiue and Shun-ichi Tachibana.
\newblock Les vari\'{e}t\'{e}s riemanniennes dont l'op\'{e}rateur de courbure
  restreint est positif sont des sph\`eres d'homologie r\'{e}elle.
\newblock {\em C. R. Acad. Sci. Paris S\'{e}r. A-B}, 289(1):A29--A30, 1979.

\bibitem[Tac74]{Tachibana74}
Shun-ichi Tachibana.
\newblock A theorem of {R}iemannian manifolds of positive curvature operator.
\newblock {\em Proc. Japan Acad.}, 50:301--302, 1974.

\bibitem[Wil07]{Wilking07}
Burkhard Wilking.
\newblock Nonnegatively and positively curved manifolds.
\newblock In {\em Surveys in differential geometry. {V}ol. {XI}}, volume~11 of
  {\em Surv. Differ. Geom.}, pages 25--62. Int. Press, Somerville, MA, 2007.

\bibitem[Wil13]{Wilking13}
Burkhard Wilking.
\newblock A {L}ie algebraic approach to {R}icci flow invariant curvature
  conditions and {H}arnack inequalities.
\newblock {\em J. Reine Angew. Math.}, 679:223--247, 2013.

\end{thebibliography}

\end{document}